\title[Effective freeness]
{Koll\'ar-type effective freeness for quasi-log canonical 
pairs}
\author{Osamu Fujino}
\date{2016/11/22, version 0.05}
\subjclass[2010]{Primary 14C20; Secondary 14E30.}
\keywords{quasi-log canonical pairs, basepoint-free theorem, 
quasi-log structures, effective very ampleness}
\address{Department of Mathematics, Graduate School of Science,
Osaka University, Toyonaka, Osaka 560-0043, Japan}
\email{fujino@math.sci.osaka-u.ac.jp}
\newcommand{\Supp}[0]{\operatorname{Supp}}
\newcommand{\Nqlc}[0]{\operatorname{Nqlc}}
\newcommand{\Bs}[0]{\operatorname{Bs}}
\newtheorem{thm}{Theorem}[section]
\newtheorem{lem}[thm]{Lemma}
\newtheorem{cor}[thm]{Corollary}
\newtheorem*{claim}{Claim}
\theoremstyle{definition}
\newtheorem{rem}[thm]{Remark}
\newtheorem*{ack}{Acknowledgments}
\newtheorem{say}[thm]{}
\newtheorem{step}{Step}
\begin{document}
\bibliographystyle{amsalpha+}

\maketitle

\begin{abstract}
We prove Koll\'ar-type effective basepoint-free 
theorems for quasi-log canonical pairs. 
\end{abstract}

\tableofcontents
\section{Introduction}\label{f-sec1}

In \cite{kollar}, J\'anos Koll\'ar established an effective 
basepoint-free theorem for kawamata log terminal pairs 
inspired by Demailly's paper \cite{demailly}. 
Of course, Demailly's arguments in \cite{demailly} 
are analytic. 
In \cite{kollar}, Koll\'ar wrote: 
\begin{quote}
The algebraic method works for $X$ singular and also for certain 
non ample line bundles. 
I formulated the most general case; readers interested in smooth varieties 
should just always set $\Delta=0$. 
\end{quote}

The author generalized Koll\'ar's 
effective basepoint-free theorem for log canonical pairs in 
\cite{fujino-effective1}. We note that 
the category of log canonical pairs is much wider than that of 
kawamata log terminal pairs and is the largest class for which 
the minimal model program can work (see \cite{fujino-fundamental}). 
Although the arguments in \cite{fujino-effective1} 
are algebraic and essentially the same as 
Koll\'ar's in \cite{kollar}, the vanishing theorems 
used in \cite{fujino-effective1} are much 
sharper than the Kawamata--Viehweg vanishing theorem, 
which is the main ingredient of \cite{kollar}. 
Since our vanishing theorems (see \cite{fujino-vanishing}, \cite{fujino-injectivity}, 
and \cite[Chapter 5]{fujino-foundation}) are sufficiently powerful, 
we can apply Koll\'ar's method in \cite{kollar} 
to quasi-log canonical pairs. 
We note that the notion of quasi-log canonical pairs was introduced by 
Florin Ambro in \cite{ambro}. 
We also note that 
any (quasi-projective semi-)log canonical pair can be 
seen as a quasi-log canonical pair (see \cite{fujino-slc}, 
\ref{f-say1.6} and Theorem \ref{f-thm1.7}). 

The main result of this paper is the following effective 
freeness and very ampleness for quasi-log canonical pairs. 

\begin{thm}[Effective freeness and very ampleness I]\label{f-thm1.1}
Let $[X, \omega]$ be a quasi-log canonical 
pair and let $\pi:X\to Y$ be a projective 
morphism between schemes. 
Let $D$ be a $\pi$-nef Cartier divisor on $X$ such that 
$aD-\omega$ is $\pi$-ample for some $a\geq 0$. 
Then there exists a positive integer $m=m(\dim X, a)$, which 
only depends on $\dim X$ and $a$, such that 
$\mathcal O_X(mD)$ is $\pi$-generated. 
We further assume that $D$ is $\pi$-ample. 
Then there exists a positive integer $m'=m'(\dim X, a)$ depending 
only on $\dim X$ and $a$ such that $\mathcal O_X(m'D)$ is $\pi$-very 
ample. 
\end{thm}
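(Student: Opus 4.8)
The plan is to transplant Koll\'ar's inductive method from \cite{kollar} to the quasi-log canonical setting, substituting the vanishing theorems for quasi-log schemes (\cite{fujino-vanishing}, \cite{fujino-injectivity}, \cite[Chapter 5]{fujino-foundation}) for the Kawamata--Viehweg vanishing theorem and the adjunction theorem for quasi-log canonical centers for ordinary adjunction. As a preliminary, following the reduction already used in \cite{fujino-effective1}, I would reduce to the case where $Y$ is a point over an algebraically closed field, so that $X$ is a projective quasi-log canonical pair; the point is that $mD-\omega$ is $\pi$-ample for every $m\ge a$, whence $R^{i}\pi_{*}\mathcal O_{X}(mD)=0$ for all $i>0$ and the relative and absolute formulations are equivalent. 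In this reduced situation $\mathcal O_{X}(mD)$ is $\pi$-generated exactly when $\operatorname{Bs}|mD|=\emptyset$, and, when $D$ is ample, $\mathcal O_{X}(m'D)$ is $\pi$-very ample exactly when $|m'D|$ separates points and tangent vectors. Everything is then arranged as an induction on $n=\dim X$, the case $n=0$ being trivial, with the numerical bookkeeping set up so that all auxiliary coefficients remain bounded purely in terms of $n$ and $a$.

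For the inductive step in the freeness statement, fix a closed point $x\in X$; the aim is a section of $\mathcal O_{X}(mD)$ not vanishing at $x$. Following \cite{kollar}, I would enlarge $\omega$ by an effective $\mathbb Q$-Cartier divisor $N\sim_{\mathbb Q}tD$, with $t=t(n)$ bounded and with sufficiently high multiplicity at $x$, and rescale $N$ to the appropriate quasi-log canonical threshold, so that $[X,\omega+N]$ becomes a quasi-log canonical pair with a quasi-log canonical center $W$ through $x$ minimal among such. When $D$ is big this can be done with $\dim W<n$: the high-multiplicity divisor $N$ is produced from an effective lower bound for $h^{0}(X,\mathcal O_{X}(\ell D))$ in terms of $D^{n}$, exactly as in \cite{kollar}, and this estimate fixes the shape of the final bound. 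By adjunction $[W,(\omega+N)|_{W}]$ is again quasi-log canonical, and $(mD-\omega-N)|_{W}$ stays ample for $m$ of the targeted size, so the induction hypothesis yields a section of $\mathcal O_{W}(mD|_{W})$ not vanishing at $x$; after a tie-breaking perturbation making the union of $W$ with the remaining unwanted centers equal to $W$ as a scheme, the vanishing theorem for quasi-log schemes gives $H^{1}(X,\mathcal I_{W}\otimes\mathcal O_{X}(mD))=0$, and this section lifts to $X$.

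I expect the case in which $D$ is not big to be the main obstacle. There one cannot find a divisor in $|\ell D|$ with large multiplicity at $x$, so the center-creation step fails at $X$ itself. Since $D$ is $\pi$-nef with $aD-\omega$ $\pi$-ample, the non-effective basepoint-free theorem for quasi-log canonical pairs already makes $D$ semi-ample, hence equal to $g^{*}A$ for a morphism $g\colon X\to X'$ with $\dim X'<n$ and $A$ ample on $X'$; the plan is to descend to $X'$, equipped with an induced quasi-log canonical structure $[X',\omega']$ coming from the canonical bundle formula for quasi-log canonical pairs, to check that $A-\omega'$ is ample and that the relevant numerical data are still controlled by $n$ and $a$, and to apply the induction on $\dim X'$. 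Verifying that the quasi-log canonical structure and the positivity of $mD-\omega$ survive this descent while all coefficients stay bounded in terms of $n$ and $a$ alone --- and then unwinding the recursion to read off the explicit function $m(\dim X,a)$ --- is the technical core of the argument.

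Finally, the very ampleness statement is obtained by the same scheme applied to two closed points $x_{1},x_{2}$ at once (allowing $x_{1}=x_{2}$ together with a tangent direction at $x_{1}$): one chooses $N\sim_{\mathbb Q}t'D$ with high multiplicity at both points, creates minimal quasi-log canonical centers through each, and either lifts two independent sections or, via the induction, a section achieving the required separation, once more using that $H^{1}$ of the appropriate twisted ideal sheaf vanishes. This produces a larger but still explicit bound $m'(\dim X,a)$, as asserted.
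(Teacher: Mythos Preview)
Your proposal, despite its opening sentence, describes something much closer to the Angehrn--Siu method than to Koll\'ar's argument in \cite{kollar}. Koll\'ar never fixes a closed point $x$ and never constructs a divisor of high multiplicity at that point. His method, as transplanted in Section~\ref{f-sec3} of the paper, is a global base-locus reduction: one starts from $L=mL^0$ already known to have sections, takes $k+1$ general members of $|L|$ (where $k=\mathrm{codim}\,\Bs|L|$) to make the pair non-qlc along the base locus, and then uses vanishing plus a Hilbert-polynomial counting argument (Lemma~\ref{f-lem3.4}, Corollary~\ref{f-cor3.5}) to show that $\Bs|(2k+2)L|$ has strictly smaller dimension than $\Bs|L|$. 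Iterating at most $\dim X$ times empties the base locus, and the product of the factors yields the explicit $m(\dim X,a)$. No center through a prescribed point is ever produced.

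The concrete gap in your plan is the non-big case. You propose to contract via $g:X\to X'$ and equip $X'$ with ``an induced quasi-log canonical structure $[X',\omega']$ coming from the canonical bundle formula for quasi-log canonical pairs.'' No such canonical bundle formula is available for general qlc pairs, and the paper does not invoke one. The paper's point is precisely to \emph{avoid} putting any structure on the target: in Lemma~\ref{f-lem3.4} the contraction $p:X\to Z$ is used only to host the ample divisor $L^0_Z$, while the qlc structure stays on $X$, the non-qlc locus is created on $X$, and the sections are produced on a quasi-log resolution of $X$ and pushed down. Without this device your induction breaks when $D$ is not big, and the descent you sketch cannot be carried out with existing tools. (Incidentally, the reduction in \cite{fujino-effective1} is to $Y$ affine, not to $Y$ a point; the qlc structure does not in general base-change well enough to justify the stronger reduction you claim.)

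For very ampleness the paper also proceeds differently: once $|mD|$ is free with $D$ $\pi$-ample, Lemma~\ref{f-lem4.1} uses the vanishing $R^i\pi_*\mathcal O_X((n+1-i)mD)=0$ together with Castelnuovo--Mumford regularity to conclude that $(\dim X+1)mD$ is $\pi$-very ample, giving $m'(\dim X,a)=(\dim X+1)\,m(\dim X,a)$ with no point-separation argument at all.
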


In Theorem \ref{f-thm1.1}, it is sufficient to 
assume that 
$aD-\omega$ is nef and log big over $Y$ with respect to 
$[X, \omega]$ when $\omega$ is a $\mathbb Q$-Cartier divisor 
(or a $\mathbb Q$-line bundle). 
Precisely speaking, we have: 

\begin{thm}[Effective freeness and very ampleness II]\label{f-thm1.2}
Let $[X, \omega]$ be a quasi-log canonical 
pair and let $\pi:X\to Y$ be a projective 
morphism between schemes. 
Let $D$ be a $\pi$-nef Cartier divisor on $X$ such that 
$aD-\omega$ is nef and log big over $Y$ with 
respect to $[X, \omega]$ for some $a\geq 0$. 
We further assume that $\omega$ is a $\mathbb Q$-Cartier 
divisor {\em{(}}or a $\mathbb Q$-line bundle{\em{)}} on $X$.  
Then there exists a positive integer $m=m(\dim X, a)$, which 
only depends on $\dim X$ and $a$, such that 
$\mathcal O_X(mD)$ is $\pi$-generated. 
We further assume that $D$ is $\pi$-ample. 
Then there exists a positive integer $m'=m'(\dim X, a)$ depending 
only on $\dim X$ and $a$ such that $\mathcal O_X(m'D)$ is $\pi$-very 
ample. 
\end{thm}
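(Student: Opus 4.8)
The plan is to deduce Theorem \ref{f-thm1.2} from Theorem \ref{f-thm1.1} by upgrading the hypothesis ``$aD-\omega$ is nef and log big over $Y$ with respect to $[X,\omega]$'' to an ampleness-type hypothesis that the machinery of Theorem \ref{f-thm1.1} can digest. The essential point is that, in the theory of quasi-log canonical pairs, a nef and log big divisor behaves — for the purposes of vanishing theorems and the inductive cutting argument on qlc strata — exactly like an ample divisor. Concretely, I would first recall the characterization: $aD-\omega$ is nef and log big over $Y$ with respect to $[X,\omega]$ means that $aD-\omega$ is $\pi$-nef and its restriction to every qlc stratum of $[X,\omega]$ (including $X$ itself) is big over $Y$. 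Since $\omega$ is assumed $\mathbb Q$-Cartier here, $aD-\omega$ is an honest $\mathbb Q$-Cartier divisor, so one may apply Kodaira's lemma on each stratum.

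\medskip

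Next I would perturb. By Kodaira's lemma applied relatively over $Y$, there exists an effective $\mathbb Q$-Cartier $\mathbb Q$-divisor $E$ on $X$ such that $aD-\omega - \varepsilon E$ is $\pi$-ample for all sufficiently small $\varepsilon>0$, and such that $E$ does not contain any qlc stratum of $[X,\omega]$ in its support (this last property is arranged by choosing $E$ adapted to the stratification, using that bigness holds stratum-by-stratum). Then one replaces the quasi-log structure $[X,\omega]$ by the new quasi-log structure $[X,\omega+\varepsilon E]$ for small rational $\varepsilon$: by the standard perturbation results for quasi-log canonical pairs (the fact that adding a small effective divisor disjoint from the qlc strata preserves the qlc property, with the same strata), $[X,\omega+\varepsilon E]$ is again quasi-log canonical. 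Now set $\omega' = \omega + \varepsilon E$ and observe $aD - \omega' = (aD-\omega) - \varepsilon E$ is $\pi$-ample by construction. Hence Theorem \ref{f-thm1.1} applies to $[X,\omega']$ with the same $D$ and the same $a$.

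\medskip

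Finally, I would check uniformity of the constants. Theorem \ref{f-thm1.1} produces $m=m(\dim X,a)$ and $m'=m'(\dim X,a)$ depending only on $\dim X$ and $a$; since $\dim X$, $a$, and $D$ are unchanged by the passage from $\omega$ to $\omega'$, the very same integers $m(\dim X,a)$ and $m'(\dim X,a)$ work for Theorem \ref{f-thm1.2}, giving that $\mathcal O_X(mD)$ is $\pi$-generated and, under the additional assumption that $D$ is $\pi$-ample, that $\mathcal O_X(m'D)$ is $\pi$-very ample. This is the crucial selling point: the perturbation is invisible to the numerical data controlling the bound.

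\medskip

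The main obstacle I anticipate is the perturbation step: one must ensure that the auxiliary divisor $E$ can be chosen so that $[X,\omega+\varepsilon E]$ genuinely remains quasi-log canonical with the \emph{same} set of qlc strata, which requires $E$ to avoid all qlc centers and requires control on the ambient space of the quasi-log structure (pulling $E$ back appropriately and invoking that a small effective perturbation supported away from the relevant locus does not create new non-qlc points). Handling the relative (over $Y$) version of Kodaira's lemma simultaneously on all strata, and patching these perturbations into a single global $E$, is the technical heart; everything else is bookkeeping and a direct citation of Theorem \ref{f-thm1.1}.
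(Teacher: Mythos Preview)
Your approach is essentially the reverse of the paper's, and the perturbation step you flag as ``the main obstacle'' is a genuine gap, not just a technicality.

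The paper does \emph{not} reduce Theorem \ref{f-thm1.2} to Theorem \ref{f-thm1.1}. Instead, the entire Koll\'ar-type machinery in Section \ref{f-sec3} (the constructions in \ref{f-say3.1}--\ref{f-say3.3}, Lemma \ref{f-lem3.4}, Corollary \ref{f-cor3.5}, Lemma \ref{f-lem3.6}) is formulated directly under the hypothesis that $aD-\omega$ is \emph{nef and log big} over $Y$. The key input that makes this work is the basepoint-free theorem of Reid--Fukuda type for qlc pairs, which requires only nef and log big rather than ample; this is what makes $M_j$ in the proof of Lemma \ref{f-lem3.4} $\pi$-semiample. The two theorems are then proved simultaneously in Section \ref{f-sec5}: for Theorem \ref{f-thm1.1} one perturbs $\omega$ (an $\mathbb R$-divisor) to a nearby $\mathbb Q$-divisor while keeping $aD-\omega$ $\pi$-ample (ampleness is open), and then one is in the setting of Theorem \ref{f-thm1.2}. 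So if anything the reduction runs $1.1 \Rightarrow 1.2$-setting, not the other way.

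Your proposed reduction $1.2 \Rightarrow 1.1$ requires producing an effective $\mathbb Q$-Cartier divisor $E$ with (i) $aD-\omega-\varepsilon E$ $\pi$-ample, (ii) $\Supp E$ containing no qlc center, and (iii) $[X,\omega+\varepsilon E]$ still qlc. Kodaira's lemma on $X$ gives some $E$, but with no control on its support relative to the qlc centers; knowing that $(aD-\omega)|_C$ is big for each qlc center $C$ does not let you ``patch'' stratum-wise choices of $E$ into a single global one, and the restriction maps $H^0(X,m(aD-\omega)-H)\to H^0(C,\cdot)$ needed to move $E$ off a given center are not available without already invoking the vanishing/surjectivity results that constitute the substance of the proof. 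Moreover, Lemma \ref{f-lem2.4} only endows $[X,\omega+\varepsilon E]$ with a quasi-log structure; it does not assert qlc-ness, and in general it will fail (cf.\ Lemma \ref{f-lem2.5}). Establishing (i)--(iii) in the qlc setting is at least as hard as what the paper actually does, so your proposed reduction does not shortcut the argument.
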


For the constants $m(\dim X, a)$ and $m'(\dim X, a)$ in 
Theorem \ref{f-thm1.1} and Theorem \ref{f-thm1.2}, 
we have: 

\begin{rem}\label{f-rem1.3}
We can take 
\begin{equation*}
m(\dim X, a)=2^{\dim X+1}(\dim X+1)! (\lceil a\rceil +\dim X)
\end{equation*}
and 
\begin{equation*}
m'(\dim X, a)=2^{\dim X+1}(\dim X+1)! (\lceil a \rceil +\dim X)
(\dim X+1)
\end{equation*} 
in Theorem \ref{f-thm1.1} and Theorem \ref{f-thm1.2}. 
The above constants are far from the optimal constants. 
\end{rem}

As a corollary of the above theorems, we have: 

\begin{cor}[Effective freeness and very ampleness 
for semi-log canonical 
pairs]\label{f-cor1.4}
Let $(X, \Delta)$ be a semi-log canonical 
pair and 
let $\pi:X\to Y$ be a projective morphism 
between schemes. 
Let $D$ be a $\pi$-nef Cartier divisor on $X$ and 
let $a$ be a nonnegative real number. 
We assume one of the following conditions: 
\begin{itemize}
\item[(1)] $aD-(K_X+\Delta)$ is ample over $Y$, or 
\item[(2)] $aD-(K_X+\Delta)$ is nef and log big over 
$Y$ with respect to $(X, \Delta)$ and $\Delta$ is a $\mathbb Q$-divisor. 
\end{itemize}
Then there exists a positive integer $m=m(\dim X, a)$, 
which only depends on $\dim X$ and $a$, such that 
$\mathcal O_X(mD)$ is $\pi$-generated. 
We further assume that $D$ is $\pi$-ample. 
Then there exists a positive integer $m'=m'(\dim X, a)$ depending 
only on $\dim X$ and $a$ such that $\mathcal O_X(m'D)$ is $\pi$-very 
ample. 
\end{cor}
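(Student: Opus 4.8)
The plan is to deduce Corollary \ref{f-cor1.4} from Theorem \ref{f-thm1.1} and Theorem \ref{f-thm1.2} by equipping the semi-log canonical pair $(X, \Delta)$ with its natural quasi-log canonical structure. Since both the property that $\mathcal O_X(mD)$ is $\pi$-generated and the property that $\mathcal O_X(m'D)$ is $\pi$-very ample can be checked locally on $Y$, I would first replace $Y$ by an affine open subscheme. Then $X$ is projective over an affine scheme, hence quasi-projective, so the constructions relating semi-log canonical pairs and quasi-log canonical pairs in \cite{fujino-slc} become available.

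Next, by Theorem \ref{f-thm1.7} (see also \ref{f-say1.6}), the quasi-projective semi-log canonical pair $(X, \Delta)$ carries a quasi-log canonical structure $[X, \omega]$ with $\omega = K_X + \Delta$, and, crucially, the qlc strata of $[X, \omega]$ coincide with the slc strata of $(X, \Delta)$. In particular $\dim X$ is the same whether $X$ is viewed as the underlying scheme of the slc pair or of the qlc pair, and ``nef and log big over $Y$ with respect to $(X, \Delta)$'' means exactly the same as ``nef and log big over $Y$ with respect to $[X, \omega]$''.

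It remains to apply the main theorems. In case (1), $aD - \omega = aD - (K_X + \Delta)$ is $\pi$-ample, so Theorem \ref{f-thm1.1} produces $m = m(\dim X, a)$ with $\mathcal O_X(mD)$ $\pi$-generated and, assuming in addition that $D$ is $\pi$-ample, $m' = m'(\dim X, a)$ with $\mathcal O_X(m'D)$ $\pi$-very ample. In case (2), $\Delta$ is a $\mathbb Q$-divisor, so $\omega = K_X + \Delta$ is a $\mathbb Q$-Cartier divisor, and $aD - \omega$ is nef and log big over $Y$ with respect to $[X, \omega]$ by the translation above; hence Theorem \ref{f-thm1.2} gives the same two conclusions. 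The explicit constants may be taken as in Remark \ref{f-rem1.3}.

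The argument is essentially bookkeeping: no new geometric input is needed beyond the slc--qlc dictionary of \cite{fujino-slc} recorded in Theorem \ref{f-thm1.7}. Accordingly, the only point requiring care -- the ``main obstacle'', such as it is -- is to make sure that log bigness (and, in case (2), the $\mathbb Q$-Cartier hypothesis on $\omega$) is interpreted consistently on the semi-log canonical and the quasi-log canonical sides, which is precisely what Theorem \ref{f-thm1.7} guarantees.
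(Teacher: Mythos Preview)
Your proposal is correct and follows the paper's own proof essentially verbatim: reduce to $Y$ affine so that $X$ is quasi-projective, invoke Theorem \ref{f-thm1.7} to endow $(X,\Delta)$ with the compatible quasi-log canonical structure $[X,K_X+\Delta]$, and then apply Theorem \ref{f-thm1.1} in case (1) and Theorem \ref{f-thm1.2} in case (2). The extra care you take in matching the notions of log bigness and the $\mathbb Q$-Cartier hypothesis is exactly what the paper's phrase ``compatible with the original semi-log canonical structure'' encodes.
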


Corollary \ref{f-cor1.4} is a slight generalization of 
\cite[Theorem 6.3]{fujino-slc}. 

\begin{rem}\label{f-rem1.5}
If $(X, \Delta)$ is kawamata log terminal in Corollary 
\ref{f-cor1.4}, 
then it is sufficient to assume that $\pi$ is proper, 
$aD-(K_X+\Delta)$ is nef and big over $Y$, 
and $\Delta$ is a $\mathbb Q$-divisor (see \cite{kollar}). 
For some comments, see Remark \ref{f-rem3.7} below.
\end{rem}

Let us quickly see some basic examples of quasi-log canonical 
pairs for the reader's convenience. 

\begin{say}[Quasi-log canonical pairs]\label{f-say1.6}
Let $(X, \Delta)$ be a log canonical pair and 
let $f:Y\to X$ be a resolution such that 
$K_Y+\Delta_Y=f^*(K_X+\Delta)$ and $\Supp \Delta_Y$ is a 
simple normal crossing divisor on $Y$. 
We put $B=\Delta^{<1}_Y$ and 
$\Delta_Y=S+B$. We consider 
the following short exact sequence 
$$
0\to \mathcal O_Y(-S+\lceil -B\rceil)\to 
\mathcal O_Y(\lceil -B\rceil)\to \mathcal O_S(\lceil -B\rceil)\to 0. 
$$ 
By the Kawamata--Viehweg vanishing theorem, we 
have $R^1f_*\mathcal O_Y(-S
+\lceil -B\rceil)=0$. 
Therefore, we obtain the 
short exact sequence  
$$
0\to \mathcal J(X, \Delta)\to \mathcal O_X\to f_*\mathcal O_S(\lceil -B\rceil)\to 0, 
$$ 
where $\mathcal J(X, \Delta)=f_*\mathcal O_Y(-S+\lceil -B\rceil)$ is the multiplier ideal 
sheaf of $(X, \Delta)$. 
Let $\mathrm{Nklt}(X, \Delta)$ denote the non-klt locus 
of $(X, \Delta)$ with the reduced scheme structure. 
Then we have $f_*\mathcal O_S(\lceil -B\rceil)\simeq 
\mathcal O_{\mathrm{Nklt}(X, \Delta)}$. 
This data 
$$
f:(S, B|_S)\to \mathrm{Nklt}(X, \Delta)
$$ 
is a typical example of quasi-log canonical pairs. 
We note that 
the data 
$$
f:(X, \Delta)\to X
$$ is also a quasi-log canonical pair since 
the natural map $\mathcal O_X\to f_*\mathcal O_Y(\lceil -B\rceil)$ is an 
isomorphism. Moreover, we have: 

\begin{thm}[{\cite[Theorem 1.2]{fujino-slc}}]\label{f-thm1.7}
Let $(X, \Delta)$ be a quasi-projective semi-log canonical 
pair. Then $[X, K_X+\Delta]$ is naturally a quasi-log canonical pair. 
\end{thm}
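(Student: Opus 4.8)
The plan is to exhibit a quasi-log canonical structure on $[X, K_X+\Delta]$, that is, a proper morphism $f\colon (Y, B_Y)\to X$ from a globally embedded simple normal crossing pair $(Y, B_Y)$ all of whose boundary coefficients are at most $1$, with $f^*(K_X+\Delta)\sim_{\mathbb R}K_Y+B_Y$ and with the natural map $\mathcal O_X\to f_*\mathcal O_Y(\lceil -(B_Y^{<1})\rceil)$ an isomorphism, together with the compatibility of the strata of $(Y, B_Y)$ with the semi-log canonical strata of $(X, \Delta)$. Since $X$ is quasi-projective, globally embedded resolutions are available. The strategy is to build $f$ over the normalization $X^\nu$, where \ref{f-say1.6} applies almost verbatim, and then to descend along the conductor gluing that reconstructs $X$ from $X^\nu$.

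First, let $\nu\colon X^\nu\to X$ be the normalization and $\overline{C}\subset X^\nu$ the conductor divisor, and write $K_{X^\nu}+\overline{C}+\Delta^\nu=\nu^*(K_X+\Delta)$ with $\Delta^\nu=\nu^{-1}_*\Delta$; by the definition of a semi-log canonical pair $(X^\nu, \overline{C}+\Delta^\nu)$ is log canonical, and the normalization of $\overline{C}$ carries the conductor involution $\tau$, compatible with the different, which exhibits $X$ as the Koll\'ar gluing of $X^\nu$ along $\tau$. Next, take a projective log resolution $p\colon V\to X^\nu$ of $(X^\nu, \overline{C}+\Delta^\nu)$ with $\mathrm{Exc}(p)\cup\Supp p^{-1}_*(\overline{C}+\Delta^\nu)$ simple normal crossing, and define $B_V$ by $K_V+B_V=p^*\nu^*(K_X+\Delta)$, so that $(V, B_V)$ is sub log canonical with $\Supp B_V$ simple normal crossing. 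Running the computation of \ref{f-say1.6} on $(X^\nu,\overline{C}+\Delta^\nu)$ --- the short exact sequence there together with the Kawamata--Viehweg vanishing theorem --- gives $p_*\mathcal O_V(\lceil -(B_V^{<1})\rceil)\simeq\mathcal O_{X^\nu}$, and, restricting to the reduced divisor $S_V$ that is the strict transform of $\overline{C}$, a parallel identification over $\overline{C}$.

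Then comes the descent. Using (a lift to $V$ of) the involution $\tau$, glue $V$ along $S_V$, compatibly with $B_V$ and with a global embedding, to obtain a globally embedded simple normal crossing pair $(Y, B_Y)$ and a factorization $V\to Y\xrightarrow{\,f\,}X$ with $f$ proper and $B_Y$ the descended boundary; the relations $f^*(K_X+\Delta)\sim_{\mathbb R}K_Y+B_Y$ and ``all coefficients of $B_Y$ at most $1$'' are then inherited from the previous step. To see $\mathcal O_X\cong f_*\mathcal O_Y(\lceil -(B_Y^{<1})\rceil)$, recall that a demi-normal scheme is the equalizer of the two maps $\nu_*\mathcal O_{X^\nu}\rightrightarrows (\nu|_{\overline{C}})_*\mathcal O_{\overline{C}^\nu}$ given by the two branches glued by $\tau$; pushing forward the short exact sequence of \ref{f-say1.6} on $V$ and invoking the vanishing just obtained shows that $f_*\mathcal O_Y(\lceil -(B_Y^{<1})\rceil)$ is computed by the same equalizer. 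Finally, the strata axioms --- every stratum of $(Y, B_Y)$ dominates a semi-log canonical stratum of $(X, \Delta)$, and restriction to a union of strata is again quasi-log canonical with the expected boundary --- follow from Koll\'ar's adjunction along the conductor together with the standard behaviour of strata under log resolution.

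The main obstacle will be this descent step, and inside it the two intertwined points that $\tau$ can be realized at the level of the resolution so that $Y$ is a genuine globally embedded simple normal crossing variety over $X$ (possibly after further blow-ups to restore the simple normal crossing condition), and that forming $f_*$ commutes with forming the equalizer, so that $f_*\mathcal O_Y(\lceil -(B_Y^{<1})\rceil)$ recovers $\mathcal O_X$ rather than $\mathcal O_{X^\nu}$. This is exactly where semi-log canonicity is used in full strength --- demi-normality of $X$, compatibility of $\tau$ with the different, and the power of the vanishing theorem --- and once it is in place the remaining checks are routine.
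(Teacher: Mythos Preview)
The paper does not contain a proof of this statement: Theorem~\ref{f-thm1.7} is quoted verbatim from \cite[Theorem 1.2]{fujino-slc} and is used as a black box (for instance in the proof of Corollary~\ref{f-cor1.4}). So there is no ``paper's own proof'' to compare your proposal against.

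That said, your sketch does follow the broad outline of the argument in \cite{fujino-slc}---normalize, resolve, then descend---and you correctly identify the descent step as the crux. But the gluing you describe has a genuine gap that is more serious than you let on. The involution $\tau$ lives on the \emph{normalization} $\overline{C}^{\nu}$ of the conductor, not on $\overline{C}$ or on its strict transform $S_V\subset V$; there is no reason it should lift to an automorphism of $S_V$, even after further blow-ups. More fundamentally, when a component of $X^{\nu}$ is glued to itself along the conductor (which happens whenever an irreducible component of $X$ is non-normal in codimension one), your ``glue $V$ along $S_V$'' produces a variety whose unique component is singular along the image of $S_V$---this is normal crossing but not \emph{simple} normal crossing in the sense of \ref{f-say2.2}, where every irreducible component of $Z$ must be smooth. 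So the output $(Y,B_Y)$ is not a globally embedded simple normal crossing pair, and the definition of qlc pair is not met.

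Fujino's actual construction in \cite{fujino-slc} avoids this by a different route: rather than gluing the resolution $V$ directly, he works with an auxiliary embedded simple normal crossing model built using the quasi-projectivity hypothesis in an essential way, and the verification that $f_*\mathcal O_Y(\lceil -(B_Y^{<1})\rceil)\simeq\mathcal O_X$ uses more than the equalizer description you invoke. Your final paragraph gestures at the right ingredients (demi-normality, compatibility of $\tau$ with the different, vanishing), but the passage from those ingredients to an honest globally embedded simple normal crossing pair over $X$ is exactly the substantial content of \cite{fujino-slc}, and your proposal does not supply it.
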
 
Therefore, we can treat log canonical pairs, non-klt loci of log canonical 
pairs, and (quasi-projective) semi-log canonical pairs, and so on, in 
the same framework of quasi-log canonical pairs. 
For the precise statement and various applications of 
Theorem \ref{f-thm1.7}, see \cite{fujino-slc}. 
\end{say}

The following observation shows one of the advantages of 
using the framework of quasi-log canonical pairs. 

\begin{say}\label{f-say1.8}
Let $X'$ be any irreducible component of $X$ in 
Theorem \ref{f-thm1.1} or Theorem \ref{f-thm1.2}. 
Then, by the vanishing theorem for quasi-log canonical pairs 
(see, for example, \cite[Theorem 3.8]{fujino-reid-fukuda} and 
\cite[Chapter 6]{fujino-foundation}), 
$R^1\pi_*(\mathcal O_X(kD)\otimes \mathcal I_{X'})=0$ 
for every $k\geq a$, where 
$\mathcal I_{X'}$ is the defining ideal sheaf of $X'$. 
Therefore, the natural restriction map 
\begin{equation*}
\pi_*\mathcal O_X(kD)\to \pi_*\mathcal O_{X'}(kD)
\end{equation*} 
is surjective for every $k\geq a$. 
Moreover, by adjunction 
(see, for example, \cite[Theorem 3.8]{fujino-reid-fukuda} 
and \cite[Chapter 6]{fujino-foundation}), 
$[X', \omega|_{X'}]$ is quasi-log 
canonical. 
Thus, we may assume that $X$ is 
irreducible when we prove that 
$\mathcal O_X(mD)$ is $\pi$-generated 
for some $m\geq a$ in Theorem \ref{f-thm1.1} and Theorem \ref{f-thm1.2}. 
It is obvious that we may also assume that $Y$ is affine for the 
proof of Theorem \ref{f-thm1.1} and Theorem \ref{f-thm1.2}. 
\end{say}

Finally, we make some comments on the Angehrn--Siu-type effective 
freeness and point separation. 

\begin{say}\label{f-say1.9}
The Angehrn--Siu-type effective freeness and point separation 
for smooth projective varieties was first obtained in \cite{angehrn-siu}. 
Then it was generalized for kawamata log terminal pairs in 
\cite{kollar2} and for log canonical pairs in \cite{fujino-effective}. 
Recently, Haidong Liu established the Angehrn--Siu-type 
effective freeness and point separation for quasi-log canonical 
pairs in \cite{liu-san}. 
\end{say}

This paper is organized as follows. 
In Section \ref{f-sec2}, 
we prepare some useful lemmas for the proof of 
Theorem \ref{f-thm1.1} and Theorem \ref{f-thm1.2}. 
In Section \ref{f-sec3}, 
which is the main part of this paper, 
we translate Koll\'ar's method in \cite{kollar} 
into the framework of quasi-log canonical 
pairs. 
In Section \ref{f-sec4}, 
we prove an easy lemma on effective very ampleness. 
Section \ref{f-sec5} is devoted to the proof of Theorem 
\ref{f-thm1.1}, Theorem \ref{f-thm1.2}, and 
Corollary \ref{f-cor1.4}. 

\begin{ack}
The author was partially supported by 
JSPS KAKENHI Grant Numbers JP24684002, 
JP16H06337, JP16H03925. 
\end{ack}

We will work over $\mathbb C$, the complex number field, 
throughout this paper. For the details of the 
theory of quasi-log schemes, see \cite[Chapter 6]{fujino-foundation}. 
For a gentle introduction to the theory of quasi-log canonical pairs, 
we recommend the reader to see \cite{fujino-intro}. 
For the standard notations and conventions of the minimal 
model program, see \cite{fujino-fundamental}. 
In this paper, a {\em{scheme}} means a separated 
scheme of finite type over $\mathbb C$ and 
a {\em{variety}} means a reduced scheme. 

\section{Preliminaries}\label{f-sec2} 

We will freely use the theory of quasi-log 
schemes (see \cite[Chapter 6]{fujino-foundation}). 
We note that our formulation of quasi-log schemes 
is slightly different from Ambro's original one in \cite{ambro} 
(for the details, see 
\cite[Chapter 6]{fujino-foundation} and \cite{fujino-pull}). 

\begin{say}\label{f-say2.1}
Let $D=\sum _i a_i D_i$ be an $\mathbb R$-divisor, where 
$a_i\in \mathbb R$ and $D_i$ is a prime divisor for every $i$ such that 
$D_i\ne D_j$ for $i\ne j$. 
We put 
\begin{equation}
D^{<1}=\sum _{a_i<1} a_iD_i \quad \text{and}\quad 
\lceil D\rceil =\sum _i \lceil a_i \rceil D_i, 
\end{equation} 
where for every real number $x$, $\lceil x\rceil$ is the integer 
defined by $x\leq \lceil x\rceil <x+1$. 
We also put $\lfloor D\rfloor =-\lceil -D\rceil$ and $\{D \} =D-\lfloor D\rfloor$. 

Let $D$ be a Cartier divisor on $X$ and let $\pi:X\to Y$ be a proper 
morphism. 
Then $\Bs_\pi |D|$ denotes the relative base locus (with 
the reduced scheme structure) with respect to $\pi:X\to Y$. 
\end{say}

\begin{say}[Quasi-log canonical pairs]\label{f-say2.2}
Let $M$ be a smooth variety, let $Z$ be a simple normal crossing 
divisor on $M$, 
and let $B$ be an $\mathbb R$-divisor on $M$ such that 
$B$ and $Z$ have no common irreducible components and 
that $\Supp (B+Z)$ is a simple normal crossing divisor on $M$. 
Then $(Z, B|_Z)$ is called a globally embedded simple normal crossing pair. 

Let $X$ be a scheme and let $\omega$ be an $\mathbb R$-Cartier 
divisor (or an $\mathbb R$-line bundle) on $X$. 
Let $f:(Z, \Delta_Z)\to X$ be a proper morphism 
from a globally embedded simple normal crossing pair $(Z, \Delta_Z)$. 
If the natural map $\mathcal O_X\to 
f_*\mathcal O_Z(\lceil -(\Delta^{<1}_Z)\rceil)$ is an isomorphism 
and $f^*\omega\sim _{\mathbb R} K_Z+\Delta_Z$, then 
$[X, \omega]$ is called a quasi-log canonical 
pair (qlc pair, for short). 
We note that $f$ is not always birational and that $X$ is not necessarily 
equidimensional. 
For the details of quasi-log schemes, 
see \cite[Chapter 6]{fujino-foundation}. 
\end{say}

\begin{rem}\label{f-rem2.3}
For the details of (globally embedded) simple normal 
crossing pairs, 
see, for example, \cite[Chapter 5]{fujino-foundation}. 
For the definitions of quasi-log schemes, 
qlc strata, qlc centers, non-qlc loci, nef and 
log big divisors, and so on, 
see, for example, \cite[\S 3]{fujino-reid-fukuda} 
and \cite[Chapter 6]{fujino-foundation}. 
\end{rem}

Let us prepare some lemmas for the proof of the main results 
in the subsequent sections. 

\begin{lem}[{see \cite[Lemma 2.8]{liu-san}}]\label{f-lem2.4} 
Let $[X, \omega]$ be an irreducible 
qlc pair and let $B$ be an effective 
$\mathbb R$-Cartier divisor on $X$ such that 
$\Supp B$ contains no qlc centers of $[X, \omega]$. 
Then we can construct a natural quasi-log structure 
on $[X, \omega+B]$. 
\end{lem}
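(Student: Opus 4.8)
The plan is to produce a globally embedded simple normal crossing pair presenting the quasi-log structure on $[X, \omega+B]$ by modifying the one that presents $[X,\omega]$. Let $f:(Z,\Delta_Z)\to X$ be a globally embedded simple normal crossing pair with $f^*\omega\sim_{\mathbb R}K_Z+\Delta_Z$ and $\mathcal O_X\simeq f_*\mathcal O_Z(\lceil-(\Delta_Z^{<1})\rceil)$, as in \ref{f-say2.2}. Since $[X,\omega]$ is irreducible, there is a unique qlc stratum dominating $X$, namely $X$ itself; this stratum corresponds to a unique irreducible component $Z_0$ of $Z$ such that $f|_{Z_0}:Z_0\to X$ is generically finite and $\mathcal O_X\simeq (f|_{Z_0})_*\mathcal O_{Z_0}(\lceil-(\Delta_{Z_0}^{<1})\rceil)$, where $\Delta_{Z_0}=(\Delta_Z-Z_0)|_{Z_0}$ is the divisorial adjunction. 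After replacing $Z$ by $Z_0$, and taking a log resolution equivariantly, I may assume $Z$ is irreducible and $f:Z\to X$ is generically finite (and surjective, by irreducibility of $X$).

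The next step is to pull $B$ back and incorporate it. Since $\Supp B$ contains no qlc center of $[X,\omega]$, in particular it does not contain the image of any stratum of $(Z,\Delta_Z)$; after a further birational modification of $Z$ (which does not disturb the defining properties, by the standard invariance of these data under such modifications — see \cite[Chapter 6]{fujino-foundation}) I may assume that $f^*B + \Delta_Z$ again has simple normal crossing support and that $\Supp f^*B$ contains no stratum of $(Z,\lceil-(\Delta_Z^{<1})\rceil)$. Set $\Delta_Z' := \Delta_Z + f^*B$. Then $f^*(\omega+B)\sim_{\mathbb R}K_Z+\Delta_Z'$ by construction. It remains to check the isomorphism $\mathcal O_X\simeq f_*\mathcal O_Z(\lceil-((\Delta_Z')^{<1})\rceil)$. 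Because $f^*B$ is effective and its support contains no stratum, we have $(\Delta_Z')^{<1}\le \Delta_Z^{<1}$ on the relevant components and $\lceil-((\Delta_Z')^{<1})\rceil \le \lceil-(\Delta_Z^{<1})\rceil$; the difference is effective and supported away from the strata through which sections are controlled, so pushing forward still yields $\mathcal O_X$. One direction of the inclusion is clear; the other uses that a section of $\mathcal O_X$ is determined by its restriction to $X$ minus the qlc centers, together with the fact that $f^*B$ does not meet the generic point of any stratum, so no new poles or obstructions are introduced. This gives the natural quasi-log structure on $[X,\omega+B]$.

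The main obstacle I anticipate is verifying that the normalization/isomorphism condition $\mathcal O_X\simeq f_*\mathcal O_Z(\lceil-((\Delta_Z')^{<1})\rceil)$ is preserved after adding $f^*B$: one must argue carefully that no extra global sections appear and none are lost, which hinges precisely on the hypothesis that $\Supp B$ avoids all qlc centers (so $\Supp f^*B$ avoids all strata of $Z$), together with the necessary birational cleanup to make $f^*B+\Delta_Z$ into a simple-normal-crossing configuration without merging strata. The rest — the $\mathbb R$-linear equivalence and the bookkeeping of round-ups — is routine, and is exactly the computation carried out in \cite[Lemma 2.8]{liu-san}, which I would follow.
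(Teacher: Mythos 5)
Your argument misses the actual content of the lemma. The claim is not that $[X,\omega+B]$ is again quasi-log \emph{canonical} with boundary $\Delta_Z+f^*B$; in general it is not, and this failure is precisely what the lemma is designed to record. After setting $\Theta_Z=\Delta_Z+f^*B$, the coefficients of $\Theta_Z$ may exceed $1$, so the new structure is a quasi-log \emph{scheme} with a possibly nonempty non-qlc locus. The proof consists in defining
$\mathcal I_{\Nqlc(X,\omega+B)}=f_*\mathcal O_Z(\lceil -(\Theta_Z^{<1})\rceil-\lfloor \Theta_Z^{>1}\rfloor)$
and checking that this is an ideal sheaf of $\mathcal O_X$, via the containment
$f_*\mathcal O_Z(\lceil -(\Theta_Z^{<1})\rceil-\lfloor \Theta_Z^{>1}\rfloor)\subset f_*\mathcal O_Z(\lceil -(\Delta_Z^{<1})\rceil)\simeq \mathcal O_X$;
the structure then agrees with the old one outside $\Supp B$. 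Your proposal never mentions $\Theta_Z^{>1}$ or the non-qlc ideal at all: you only try to verify $\mathcal O_X\simeq f_*\mathcal O_Z(\lceil-((\Delta_Z+f^*B)^{<1})\rceil)$, which (besides the incorrect intermediate claim $(\Delta_Z')^{<1}\le \Delta_Z^{<1}$ --- adding an effective divisor raises coefficients) is not the condition that defines the new quasi-log structure. If your verification did establish qlc-ness of $[X,\omega+B]$, it would contradict Lemma \ref{f-lem2.5} and the use of this construction in \ref{f-say3.1}, where the non-qlc locus $\Sigma$ of $[X,\omega+B]$ is assumed nonempty and is the whole point of the method.

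A secondary problem is your opening reduction: you may not replace $Z$ by a single irreducible component $Z_0$ that is generically finite over $X$. A quasi-log resolution of an irreducible qlc pair can have several components, possibly of larger dimension and possibly not dominating $X$ (see \ref{f-say2.2}: $f$ need not be birational), and the isomorphism $\mathcal O_X\simeq f_*\mathcal O_Z(\lceil-(\Delta_Z^{<1})\rceil)$ uses all of them; discarding components is not a permitted operation and no such distinguished $Z_0$ need exist. This reduction is also unnecessary: the correct proof works with the given quasi-log resolution after blow-ups making $\Supp f^*B\cup\Supp\Delta_Z$ simple normal crossing, which is the only place the hypothesis that $\Supp B$ contains no qlc centers is used.
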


For the details of the quasi-log structure on $[X, \omega+B]$, 
see the construction in the proof below. 

\begin{proof}
We take a quasi-log resolution $f:(Z, \Delta_Z)\to X$, 
where $(Z, \Delta_Z)$ is a globally embedded simple normal crossing 
pair. 
By taking some blow-ups, 
we may assume that $\Supp f^*B\cup \Supp \Delta_Z$ is a simple 
normal crossing divisor on $Z$ since 
$\Supp B$ contains no qlc centers of $[X, \omega]$. 
We put $\Theta_Z=\Delta_Z+f^*B$. 
Then we have 
\begin{equation}
f_*\mathcal O_Z(\lceil -(\Theta^{<1}_Z)\rceil-\lfloor 
\Theta^{>1}_Z\rfloor)\subset 
f_*\mathcal O_Z(\lceil -(\Delta^{<1}_Z)\rceil)\simeq \mathcal O_X. 
\end{equation}
Therefore, by setting 
\begin{equation}
\mathcal I_{\Nqlc(X, \omega+B)}=f_*\mathcal O_Z(\lceil 
-(\Theta^{<1}_Z)\rceil-\lfloor 
\Theta^{>1}_Z\rfloor), 
\end{equation}
we have a quasi-log structure on $[X, \omega+B]$. 
By construction, this quasi-log structure coincides with the 
original quasi-log structure of $[X, \omega]$ outside $\Supp B$. 
\end{proof}

The following lemma is essentially the same as 
\cite[Lemma 3.16]{fujino-reid-fukuda}, which 
played a crucial role in the proof of 
the rationality theorem for quasi-log schemes 
(see \cite[Chapter 6]{fujino-foundation}). 

\begin{lem}\label{f-lem2.5}
Let $[X, \omega]$ be an irreducible qlc pair and let $V$ be an irreducible 
closed subvariety of $X$ such that 
$\mathrm{codim}_XV=k>0$. 
Let $D_0, D_1, \cdots, D_k$ be effective Cartier divisors on $X$ 
such that $V\subset \Supp D_i$ for every $i$. 
Assume that $\Supp D_i$ contains no qlc centers of $[X, \omega]$ for 
every $i$. 
Then $[X, \omega+\frac{1}{l}D_0+\sum _{i=1}^k D_i]$ is not 
qlc at the generic point of $V$ for every $l>0$. 
\end{lem}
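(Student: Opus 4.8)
The plan is first to equip $[X,\omega+B]$ with a quasi-log structure, where $B:=\tfrac{1}{l}D_0+\sum_{i=1}^{k}D_i$, by Lemma~\ref{f-lem2.4}, and then to exhibit, on a quasi-log resolution adapted to $B$, a prime divisor lying over the generic point $\eta_V$ of $V$ whose coefficient in the associated boundary exceeds $1$. To invoke Lemma~\ref{f-lem2.4} one checks that $\Supp B=\bigcup_{i=0}^{k}\Supp D_i$ contains no qlc center of $[X,\omega]$; this is immediate since a qlc center is irreducible, so if it were contained in this finite union of closed sets it would be contained in one of the $\Supp D_i$, against the hypothesis. Thus Lemma~\ref{f-lem2.4} applies, and from its proof we record: for a quasi-log resolution $f\colon(Z,\Delta_Z)\to X$ chosen so that $\Supp f^*B\cup\Supp\Delta_Z$ is simple normal crossing, setting $\Theta_Z:=\Delta_Z+f^*B$ we have $\mathcal I_{\Nqlc(X,\omega+B)}=f_*\mathcal O_Z(\lceil -(\Theta^{<1}_Z)\rceil-\lfloor\Theta^{>1}_Z\rfloor)$.

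The next step is to reduce the assertion to a multiplicity statement. The pair $[X,\omega+B]$ fails to be qlc at $\eta_V$ exactly when $\eta_V\in\Nqlc(X,\omega+B)$, i.e.\ when the constant function $1$ does not lie in $\mathcal I_{\Nqlc(X,\omega+B)}$ near $\eta_V$. By the formula above this holds as soon as there is --- possibly after a further permissible blow-up of the ambient simple normal crossing pair --- a prime divisor $E$ on $Z$ with $\eta_V\in f(E)$ and $\mathrm{mult}_E\Theta_Z>1$: indeed then $E$ is not a component of $\Theta^{<1}_Z$ and occurs in $-\lfloor\Theta^{>1}_Z\rfloor$ with coefficient $-\lfloor\mathrm{mult}_E\Theta_Z\rfloor\le -1$, so $1\notin f_*\mathcal O_Z(\lceil -(\Theta^{<1}_Z)\rceil-\lfloor\Theta^{>1}_Z\rfloor)$ over any neighbourhood of $\eta_V$. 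It therefore suffices to construct such an $E$.

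For the construction I would pass to an irreducible component $Z_0$ of $Z$ dominating $X$, write $f_0=f|_{Z_0}\colon Z_0\to X$, and let $\Delta_{Z_0}$ be the $\mathbb R$-divisor on the smooth variety $Z_0$ with $K_{Z_0}+\Delta_{Z_0}\sim_{\mathbb R}f_0^*\omega$ given by adjunction. Two numerical facts drive the argument. (i) Since $V\subset\Supp D_i$ and each $D_i$ is an integral Cartier divisor, any divisor over $\eta_V$ has multiplicity $\ge 1$ in the pullback of $D_i$ for every $i=0,1,\dots,k$, hence multiplicity $\ge k+\tfrac{1}{l}$ in the pullback of $B$. (ii) Since $f_0$ is dominant, $f_0^{-1}(V)$ has a component $W$ with $f_0(W)=V$, necessarily of codimension $\le k$ in $Z_0$; over a general point of $V$ one can, after possibly replacing the model, arrange that $W$ is disjoint from the other components of $Z$ and that $\mathrm{mult}_W\Delta_{Z_0}\ge 0$. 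Blowing up $W$ (a permissible operation), the resulting exceptional divisor $E$ over $\eta_V$ has coefficient $\mathrm{mult}_W\Delta_{Z_0}-(\mathrm{codim}_{Z_0}W-1)$ in the boundary obtained from $(Z_0,\Delta_{Z_0})$, so by (i),
\[
\mathrm{mult}_E\Theta_Z\ \ge\ (\mathrm{mult}_W\Delta_{Z_0}-(k-1))+(k+\tfrac{1}{l})\ \ge\ 1+\tfrac{1}{l}\ >\ 1,
\]
as desired. When $k=1$ no blow-up is needed: $V$ is not a qlc center of $[X,\omega]$ (it lies in $\Supp D_1$), so a prime divisor $W$ over $V$ has $\mathrm{mult}_W\Delta_{Z_0}<1$, while $\mathrm{mult}_W\Delta_{Z_0}\ge 0$ because $\Delta_{Z_0}$ has no negative-coefficient component dominating a divisor on $X$ (a consequence of $\mathcal O_X\simeq f_*\mathcal O_Z(\lceil -(\Delta^{<1}_Z)\rceil)$); then $\mathrm{mult}_W\Theta_Z\ge 0+\tfrac{1}{l}+1>1$.

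I expect the heart of the matter to be point (ii) when $k\ge 2$: one must produce a divisor over $\eta_V$ of $K$-log discrepancy at most $k-1$ that picks up no negative contribution from $\Delta_{Z_0}$, which rests on the negative part of $\Delta_{Z_0}$ being suitably exceptional over $X$ together with a sufficiently general choice of both the quasi-log resolution and the center $W$. This is precisely the type of argument carried out in \cite[Lemma 3.16]{fujino-reid-fukuda}, which this lemma is said to be essentially the same as, and I would adapt that argument here; the remaining points --- permissibility of the blow-up of the ambient simple normal crossing pair, and that $\mathrm{mult}_E\Theta_Z$ may be computed on $Z_0$ near the general point of $V$ --- are routine.
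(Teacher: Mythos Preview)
Your approach is genuinely different from the paper's, and the gap you yourself flag in point (ii) is real and not easily closed along the lines you suggest.

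The paper argues by contradiction. Assuming the pair is qlc at $\eta_V$, it shrinks $X$, cuts by general hyperplanes to reduce to $\dim V=0$, and then runs an induction on $\dim X$: for an irreducible component $X'$ of $\Supp D_1$ through $V$, it shows (the Claim) that some stratum of $(Z,\Delta_Z+f^*D_1)$ maps onto $X'$, so $X'$ is a qlc center of $[X,\omega+D_1+\cdots]$ and one applies adjunction to pass to $[X',(\omega+D_1)|_{X'}+\cdots]$. After $k$ steps one is on a curve and derives the contradiction directly. No discrepancy bookkeeping on $Z$ is needed; the entire argument stays within the qlc adjunction formalism.

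Your direct multiplicity approach is the natural one for log canonical pairs, but in the qlc setting it runs into two difficulties you do not overcome. First, the quasi-log resolution $f\colon Z\to X$ is in general not birational (indeed $Z$ may have strictly larger dimension than $X$), so restricting to a dominating component $Z_0$ and performing a blow-up along $W\subset Z_0$ is not a ``permissible operation'' in the sense required: the globally embedded simple normal crossing structure on $(Z,\Delta_Z)$ lives on the ambient smooth variety $M$, and an arbitrary blow-up of $Z_0$ along a center that is not a stratum does not obviously yield a new globally embedded SNC pair computing the same non-qlc ideal. Second, and more seriously, the inequality $\mathrm{mult}_W\Delta_{Z_0}\ge 0$ need not hold for a component $W$ of $f_0^{-1}(V)$: the negative part of $\Delta_{Z_0}$ consists of divisors whose $f$-image is a proper subvariety of $X$, and nothing prevents such an image from containing $V$ (for instance, when $V$ is a point of a surface $X$ and $f_0$ has already blown up something over $V$, the fiber $W$ can be an exceptional curve with negative coefficient). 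Your deferral of this point to \cite[Lemma 3.16]{fujino-reid-fukuda} is circular: that lemma is proved by exactly the adjunction-and-induction method the present paper uses, not by producing a center with $\mathrm{mult}_W\Delta_{Z_0}\ge 0$. Even your $k=1$ paragraph relies on the same nonnegativity, and the justification via $\mathcal O_X\simeq f_*\mathcal O_Z(\lceil-(\Delta^{<1}_Z)\rceil)$ does not go through once $f$ is not birational and $Z$ has several components.

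In short, the strategy is plausible for log canonical pairs but, as written, does not constitute a proof for qlc pairs; the paper's inductive adjunction argument is what actually handles the non-birational, reducible-$Z$ situation.
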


In Lemma \ref{f-lem2.5}, we know that 
$[X, \omega+\frac{1}{l}D_0+\sum _{i=1}^kD_i]$ 
has a quasi-log structure, which 
coincides with the original quasi-log 
structure of $[X, \omega]$ outside 
$\sum_{i=0}^k \Supp D_i$, by Lemma \ref{f-lem2.4} since 
$\Supp D_i$ contains no qlc centers of $[X, \omega]$ 
for every $i$. 

\begin{proof}
We will get a contradiction by assuming that $[X, \omega+\frac{1}{l}D_0
+\sum _{i=1}^k D_i]$ is qlc at the generic point of $V$. 
By shrinking $X$, we may assume that $X$ is quasi-projective and 
that $[X, \omega+\frac{1}{l}D_0
+\sum _{i=1}^k D_i]$ is qlc everywhere. 
By taking general hyperplane cuts by adjunction 
(see, for example, \cite[Theorem 3.8]{fujino-reid-fukuda} 
and \cite[Chapter 6]{fujino-foundation}), 
we may assume that $\dim V=0$. 
By shrinking $X$ again, 
we may further assume that 
$V$ is a closed point of $X$. 
Let $f:(Z, \Delta_Z)\to X$ be a quasi-log resolution 
of $[X, \omega]$ 
such that $\Supp \Delta_Z\cup \sum _{i=0}^k \Supp f^*D_i$ is 
a simple normal crossing divisor on $Z$ 
(see the proof of Lemma \ref{f-lem2.4}). 
Let $X'$ be an irreducible component of 
$\Supp D_1$ passing through 
$V$. 
\begin{claim}There exists a stratum of $(Z, \Delta_Z+f^*D_1)$ which is 
mapped onto $X'$. 
\end{claim}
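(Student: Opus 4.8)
The plan is to produce the desired stratum by starting from a stratum of $(Z,\Delta_Z)$ that dominates $X$ and then descending it onto $X'$ along the divisor $D_1$. First I would record two preliminary observations. Since $[X,\omega]$ is an irreducible qlc pair, the defining isomorphism $\mathcal O_X\simeq f_*\mathcal O_Z(\lceil -(\Delta_Z^{<1})\rceil)$ forces $f$ to be surjective, so some irreducible component $W$ of $Z$ maps onto $X$; such a $W$ is a stratum of $(Z,\Delta_Z)$ and is a smooth variety. Second, because $\Supp D_1$ contains no qlc center of $[X,\omega]$, no irreducible component of $Z$ can map into $\Supp D_1$, since its image, being a qlc stratum strictly contained in $X$, would be a qlc center inside $\Supp D_1$. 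Hence $f^*D_1$ is a genuine effective Cartier divisor on $Z$ with $\Supp f^*D_1=f^{-1}(\Supp D_1)$, and in particular $(f|_W)^*D_1$ is a nonzero effective divisor on $W$.

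Next I would descend to $X'$. Since $f|_W\colon W\to X$ is proper and surjective and $X'$ is a prime divisor on $X$ through the image $X$ of $W$, the fibre of $f|_W$ over the generic point of $X'$ is nonempty, so there is an irreducible component $P$ of $(f|_W)^{-1}(X')$ dominating $X'$; then $f(P)=X'$ by properness. As $f(P)=X'\subset\Supp D_1$ we get $P\subset\Supp f^*D_1$, so $P$ lies in some prime component $T_0$ of $f^*D_1$, and replacing $P$ by an irreducible component of $T_0\cap W$ containing it changes nothing: any irreducible closed subvariety of $\Supp D_1$ containing $X'$ must equal the component $X'$, so by properness we still have $f(P)=X'$. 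Now $T_0$ is a prime divisor of $Z$ with $f(T_0)=X'$, and since $X'$ is a \emph{component} of $\Supp D_1$, a local equation of $D_1$ near the generic point of $X'$ vanishes along $f(T_0)$; hence $\operatorname{coeff}_{T_0}(f^*D_1)\ge 1$. Granting that the coefficient of $T_0$ in $\Delta_Z+f^*D_1$ is therefore at least $1$, the divisor $T_0$ is a coefficient-$\ge 1$ component of $\Delta_Z+f^*D_1$, and since $\Supp\Delta_Z\cup\Supp f^*D_1$ is simple normal crossing, the irreducible components of the transverse intersection $W\cap T_0$ are strata of $(Z,\Delta_Z+f^*D_1)$. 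Thus $P$ is such a stratum and $f(P)=X'$, which proves the Claim.

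The step I expect to be the main obstacle is precisely the point granted above: checking that $\operatorname{coeff}_{T_0}(\Delta_Z+f^*D_1)\ge 1$, that is, that $\operatorname{coeff}_{T_0}(\Delta_Z)$ is not so negative as to destroy the bound coming from $f^*D_1$. This is where the full force of the definition of a qlc pair is needed: the pushforward isomorphism $\mathcal O_X\simeq f_*\mathcal O_Z(\lceil -(\Delta_Z^{<1})\rceil)$ strongly limits how negative $\Delta_Z$ can be along a prime divisor whose image is the codimension-one subvariety $X'$, while the hypothesis that $\Supp D_1$ contains no qlc center forbids $\operatorname{coeff}_{T_0}(\Delta_Z)=1$; one may also have to first replace $f$ by a higher quasi-log resolution adapted to $D_1$, exactly as in the proof of Lemma \ref{f-lem2.4}. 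The simple normal crossing hypothesis on $\Supp\Delta_Z\cup\sum_{i=0}^k\Supp f^*D_i$ plays its customary role of ensuring that adjoining the extra divisor $T_0$ keeps all relevant intersections transverse, so that their irreducible components are again strata.
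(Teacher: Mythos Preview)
Your direct-construction strategy runs into a real obstruction at exactly the point you flag, and the paper's proof avoids it by a different, global argument. The qlc isomorphism $\mathcal O_X\simeq f_*\mathcal O_Z(\lceil -(\Delta_Z^{<1})\rceil)$ does not bound the coefficient of $\Delta_Z$ along any \emph{single} prescribed prime divisor $T_0$ over $X'$; it only says that not \emph{all} such divisors can simultaneously carry large positive coefficient in $\lceil -(\Delta_Z^{<1})\rceil$. Nothing in your setup prevents your particular $T_0$ from having $\operatorname{coeff}_{T_0}(\Delta_Z)$ so negative that $\operatorname{coeff}_{T_0}(\Delta_Z+f^*D_1)<1$, provided some \emph{other} component over $X'$ has coefficient exactly $1$. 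So the step you ``grant'' genuinely cannot be verified for a $T_0$ chosen in advance, and there is no evident way to pick $T_0$ more carefully without already knowing the conclusion.

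The paper instead argues by contradiction. Assume \emph{no} stratum of $(Z,\Delta_Z+f^*D_1)$ maps onto $X'$. The ambient hypothesis of Lemma~\ref{f-lem2.5} (we are assuming $[X,\omega+\frac{1}{l}D_0+\sum_i D_i]$ is qlc, hence $[X,\omega+D_1]$ is qlc) forces every coefficient of $\Delta_Z+f^*D_1$ to be $\le 1$; combined with the no-stratum assumption, every prime divisor $T$ on $Z$ with $f(T)=X'$ satisfies $\operatorname{coeff}_T(\Delta_Z+f^*D_1)<1$. Since $\operatorname{coeff}_T(f^*D_1)$ is a positive integer, this yields $f^*D_1\le \lceil -(\Delta_Z^{<1})\rceil$ over the generic point of $X'$, and pushing forward gives $\mathcal O_X(D_1)\subset f_*\mathcal O_Z(\lceil -(\Delta_Z^{<1})\rceil)\simeq\mathcal O_X$ there, contradicting $X'\subset\Supp D_1$. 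Note too that you never use the qlc hypothesis on $[X,\omega+D_1]$; without it the coefficients need not be $\le 1$, and a coefficient strictly greater than $1$ would give a non-qlc point rather than a stratum, so even establishing $\ge 1$ would not finish your argument.
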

\begin{proof}[Proof of Claim]
Since $[X, \omega+D_1]$ is qlc, 
the coefficients of $\Delta_Z+f^*D_1$ are $\leq 1$. 
Assume that no strata of $(Z, \Delta_Z+f^*D_1)$ are mapped 
onto $X'$. Then we can easily check that 
$f^*D_1\leq \lceil -(\Delta^{<1}_Z)\rceil$ over the generic point of $X'$. 
Thus we have 
\begin{equation}
f_*\mathcal O_Z(\lceil -(\Delta^{<1}_Z)\rceil)\supseteq 
\mathcal O_X(D_1)\supsetneq \mathcal O_X
\end{equation} 
at the generic point of $X'$. 
On the other hand, $f_*\mathcal O_Z(\lceil -(\Delta^{<1}_Z)\rceil)\simeq 
\mathcal O_X$ always holds because $[X, \omega]$ is qlc. 
Therefore, we get a contradiction. 
This means that there exists a stratum of $(Z, \Delta_Z+f^*D_1)$ which is 
mapped onto $X'$. 
\end{proof}
Thus, $\Supp D_i$ does not contain $X'$ for $i\ne 1$ and $X'$ is 
a qlc center of 
$$\left[X, \omega+\frac{1}{l}D_0+\sum _{i=1}^k D_i\right] 
=\left[X, \omega+D_1+\frac{1}{l}D_0+\sum _{i=2}^k D_i\right]. $$ 
By adjunction, $[X', (\omega+D_1)|_{X'} +
\frac{1}{l} D_0|_{X'}+\sum _{i=2}^k D_k|_{X'}]$ is qlc. 
By repeating this argument $(k-1)$-times, 
we can reduce the problem to the case when 
$k=1$, that is, $\dim X=1$. 
In this case, we can easily get a contradiction. 
This is because there exists a stratum of 
$[Z, \Delta_Z+f^*D_1]$ which maps to $V$ as above. 
Therefore, $[X, \omega+\frac{1}{l}D_0+\sum _{i=1}^kD_i]$ is 
not qlc at the generic point of $V$ for every $l>0$. 
\end{proof}

\section{Koll\'ar's method for quasi-log canonical pairs}\label{f-sec3}

This section is a direct generalization of 
\cite[Section 2]{fujino-effective1} for quasi-log 
canonical pairs. We will translate Koll\'ar's 
method in \cite{kollar} into the framework of 
quasi-log canonical 
pairs. 

\begin{say}\label{f-say3.1}
Let $[X, \omega]$ be a qlc pair and let $\pi:X\to Y$ be a projective 
morphism 
onto an affine scheme $Y$. 
We assume that $X$ is irreducible and that $\omega$ is a 
$\mathbb Q$-Cartier divisor (or a $\mathbb Q$-line bundle) on $X$. 
Let $p: X\to Z$ be a proper 
surjective 
morphism onto a variety $Z$ over $Y$ with $p_*\mathcal O_X\simeq 
\mathcal O_Z$. 
We have the following commutative diagram. 
\begin{equation}
\xymatrix{
X\ar[r]^{p}\ar[d]_{\pi} & Z\ar[ld]^q\\
Y &
}
\end{equation}
Let $M$ be a $\pi$-semiample 
$\mathbb Q$-Cartier divisor on $X$ and 
let $N$ be a Cartier divisor on $X$. 
Assume that 
\begin{equation}
N\sim_{\mathbb R} \omega+B+M, 
\end{equation}
where $B$ is an effective $\mathbb Q$-Cartier 
divisor on $X$ such that $\Supp B$ contains no qlc centers of $[X, \omega]$ and that $B=p^*B_{Z}$, where 
$B_Z$ is an effective $q$-ample $\mathbb Q$-divisor on $Z$. 
We can construct a natural quasi-log structure on $[X, \omega+B]$ 
by Lemma \ref{f-lem2.4}. 
Let $X\setminus \Sigma$ be the largest Zariski open subset such that 
$[X, \omega+B]$ is qlc. 
Assume that $\Sigma\ne \emptyset$. 
Let $V$ be an irreducible component of $\Sigma$ such that 
$\dim p(V)$ is maximal.  
We note that $V_Z=p(V)$ is not equal to $Z$ since $B=p^*B_Z$. 
Let $f:(W, B_W)\to X$ be a quasi-log resolution of $[X, \omega]$. 
We can write 
\begin{equation}
K_W+B_W\sim _{\mathbb R} f^*\omega. 
\end{equation}
We have the following commutative diagram. 
\begin{equation}
\xymatrix{
W \ar[d]_f\ar[rd]^g &\\ 
X \ar[r]_p\ar[d]_\pi& Z\ar[ld]^q \\
Y 
}
\end{equation}
Without loss of generality, we may assume that 
\begin{equation}
\left(W, \Supp B_W\cup\Supp f^*B\right)
\end{equation} 
is a globally 
embedded simple normal crossing pair. 
We may further assume that 
$\Supp g^{-1} (V_Z)$ is a simple normal crossing divisor 
on $W$ and that 
\begin{equation}
\left(W, \Supp B_W\cup\Supp f^*B\cup\Supp g^{-1}(V_Z)\right)
\end{equation} 
is a globally 
embedded simple normal crossing pair. 
Let $c$ be the largest real number such that 
$[X, \omega+cB]$ is qlc over the generic point of $V_Z=p(V)$. 
We note that 
\begin{equation}
K_W+B_W+cf^*B\sim _{\mathbb R} f^*(\omega+cB). 
\end{equation} 
By the assumptions, we know that $0<c<1$. 
We can write 
\begin{equation}
f^*N\sim _{\mathbb R} K_W+B_W+cf^*B+(1-c) f^*B 
+f^*M. 
\end{equation}
We put 
\begin{equation}
\lfloor B_W+cf^*B\rfloor =F+G_1+G_2-H, 
\end{equation} 
where $F, G_1, G_2$, and $H$ are effective 
and have no common irreducible components such that 
\begin{itemize}
\item[(i)] the $g$-image of every irreducible component of $F$ is $V_Z=p(V)$, 
\item[(ii)] the $g$-image of every irreducible component of $G_1$ 
does not contain $V_Z$, 
\item[(iii)] the $g$-image of every irreducible component of 
$G_2$ contains $V_Z$ but does not coincide with 
$V_Z$, and 
\item[(iv)] $f_*\mathcal O_W(H)\simeq \mathcal O_X$ since 
$f_*\mathcal O_W(\lceil -(B^{<1}_W)\rceil)\simeq 
\mathcal O_X$ and $0\leq H\leq \lceil -(B^{<1}_W)\rceil$. 
\end{itemize}
By taking some more blow-ups, 
if necessary, we may assume that $F, G_1, G_2$, and 
$H$ are Cartier. 
Note that $G_2=\lfloor G_2\rfloor$ is a reduced 
simple normal crossing divisor on $W$ and that 
no stratum $C$ of $(W, G_2)$ satisfies $g(C)\subset V_Z$ 
since $\Supp g^{-1}(V_Z)$ is a simple normal crossing 
divisor on $W$. 
We also note that $F$ is reduced by construction. 
We put $N'=f^*N+H-G_1$. 
Let us consider the following short exact sequence: 
\begin{equation} 
0\to \mathcal O_W(N'-F)\to \mathcal O_W(N')\to 
\mathcal O_F(N')\to 0. 
\end{equation} 
We note that 
\begin{equation}
N'-F\sim _{\mathbb R} K_W+f^*M+(1-c)f^*B 
+\{B_W+cf^*B\} +G_2. 
\end{equation} 
Therefore, the connecting homomorphism 
\begin{equation}
g_*\mathcal O_F(N')\to R^1g_*\mathcal O_W(N'-F)
\end{equation} 
is a zero map (see, for example, \cite[Theorem 3.8]{fujino-reid-fukuda} 
and \cite[Chapter 6]{fujino-foundation}). 
Thus we obtain that 
\begin{equation}
0\to g_*\mathcal O_W(N'-F)\to g_*\mathcal O_W(N')
\to g_*\mathcal O_F(N')\to 0 
\end{equation} 
is exact. 
By the vanishing theorem (see, for example, \cite[Theorem 3.8]{fujino-reid-fukuda} 
and \cite[Chapter 6]{fujino-foundation}), 
we have 
\begin{equation}
H^i(Z, g_*\mathcal O_W(N'-F))=H^i(Z, g_*\mathcal O_W(N'))=0
\end{equation} 
for every $i>0$. 
Therefore, 
\begin{equation}\label{eq3.15}
H^0(Z, g_*\mathcal O_W(N'))\to 
H^0(Z, 
g_*\mathcal O_F(N'))
\end{equation} 
is surjective and $H^i(Z, g_*\mathcal O_F(N'))=0$ 
for every $i>0$. 
Thus we obtain 
\begin{equation}
\dim H^0(F_{\eta}, \mathcal O_{F_\eta}(N'))=
\chi ((V_Z)_{\eta}, g_*\mathcal O_F(N')|_{(V_Z)_{\eta}})
\end{equation}
where $\eta$ is the generic point of $\pi\circ f(F)=q\circ p(V)=q(V_Z)$. 
\end{say}

\begin{say}\label{f-say3.2}
In our application, $M$ will be a variable divisor of the form 
\begin{equation}
M_j=M_0+jL^0, 
\end{equation} 
where $M_0$ is a $\pi$-semiample 
$\mathbb Q$-divisor and $L^0=p^*L^0_Z$ with a 
$q$-ample Cartier divisor $L^0_Z$ on $Z$. 
Then we obtain that 
\begin{equation}
\begin{split}
&\dim H^0(F_\eta, \mathcal O_{F_\eta} (N'_0+jf^*L^0))
\\&=\chi ((V_Z)_\eta, (g_*\mathcal O_F(N'_0)
\otimes \mathcal O_Z(jL^0_Z))|_{(V_Z)_\eta})
\end{split}
\end{equation} 
is a polynomial in $j$ for $j\geq 0$, where 
\begin{equation}
N'_0=f^*N_0+H-G_1
\end{equation} 
and 
\begin{equation}
N_0\sim_{\mathbb R} \omega+B +M_0. 
\end{equation} 
\end{say}
\begin{say}\label{f-say3.3}
We assume that we establish 
$\dim H^0(F_\eta, \mathcal O_{F_\eta}(N'))\ne 0$. 
Then, by the above surjectivity \eqref{eq3.15}, 
we can lift sections to 
$H^0(Z, g_*\mathcal O_W(N'))\simeq 
H^0(W, \mathcal O_W(f^*N+H-G_1))$. 
Since $F\not\subset \Supp G_1$, we get a section 
$s\in H^0(W, \mathcal O_W(f^*N+H))$ which is 
not identically zero along $F$. 
We know that 
\begin{equation}
H^0(W, \mathcal O_W(f^*N+H))\simeq 
H^0(X, \mathcal O_X(N))
\end{equation} 
since $f_*\mathcal O_W(H)\simeq \mathcal O_X$. 
Therefore, $s$ descends to a section of $\mathcal O_X(N)$ which 
does not vanish along $f(F)$. 
\end{say}

\begin{lem}\label{f-lem3.4}
Let $[X, \omega]$ be a qlc pair such that 
$\omega$ is a $\mathbb Q$-Cartier divisor 
$($or a $\mathbb Q$-line bundle$)$ on $X$ and that 
$X$ is irreducible. 
Let $:p:X\to Z$ be a proper surjective morphism 
onto a variety $Z$ over an affine scheme $Y$ with 
$p_*\mathcal O_X\simeq \mathcal O_Z$. 
\begin{equation}
\xymatrix{
X\ar[r]^p\ar[d]_\pi& Z \ar[ld]^q\\ 
Y &
}
\end{equation}
Let $L^0_Z$ be a $q$-ample Cartier divisor on $Z$ and let 
$L_Z\sim 
mL^0_Z$ for some $m>0$. 
We put $L^0=p^*L^0_Z$ and 
$L=p^*L_Z$. 
Assume that $aL^0-\omega$ is nef and log big over 
$Y$ with 
respect to $[X, \omega]$ for some nonnegative real number 
$a$. 
Assume that 
$q_*\mathcal O_Z(L_Z)\neq 0$ and that $\Bs _\pi |L|$ contains 
no qlc centers of $[X, \omega]$, and 
let $V_Z\subset \Bs _q |L_Z|$ be an irreducible component with 
minimal $k=\mathrm{codim}_Z V_Z$. 
Then, with at most $\dim V_Z$ exceptions, $V_Z$ is not contained 
in $\Bs_q|kL_Z+(j+\lceil 2a\rceil +1)L^0_Z|$ for 
$j\geq 0$. 
\end{lem}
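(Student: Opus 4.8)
The plan is to run Kollár's "tie-breaking" induction on the codimension $k$, exactly in the spirit of the setup of \ref{f-say3.1}--\ref{f-say3.3}, but applied to the divisor data coming from $L_Z$ on $Z$. First I would reduce the statement to producing, for all but $\dim V_Z$ values of $j\geq 0$, a section of $\mathcal O_Z(kL_Z+(j+\lceil 2a\rceil+1)L^0_Z)$ that does not vanish at the generic point of $V_Z$; by the relation $L_Z\sim mL^0_Z$ and the surjectivity machinery of \ref{f-say3.1} this is what "$V_Z\not\subset\Bs_q|\cdot|$" amounts to. To apply \ref{f-say3.1} I would set $N$ so that $N\sim_{\mathbb R}\omega+B+M$ with $M=M_0+jL^0$, where $M_0$ collects the "safe" multiples of $L^0$ and $B=p^*B_Z$ is built from a general member of $|L_Z|$ (or a small multiple thereof) in such a way that $\Supp B$ contains no qlc center of $[X,\omega]$—this is exactly where the hypothesis $\Bs_\pi|L|$ contains no qlc centers is used, so that Lemma \ref{f-lem2.4} furnishes the quasi-log structure on $[X,\omega+B]$.

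The key computation is the bookkeeping on coefficients. Since $V_Z\subset\Bs_q|L_Z|$ has codimension $k$, I can choose $k+1$ general members $D_0,\dots,D_k\in|L_Z|$ (pulled back to $X$) all passing through $V_Z$; by Lemma \ref{f-lem2.5}, $[X,\omega+\frac1l p^*D_0+\sum_{i=1}^k p^*D_i]$ fails to be qlc at the generic point of $V$, where $V$ is a component of $p^{-1}(V_Z)$ dominating $V_Z$. Rescaling, with $B=p^*B_Z$ for a suitable $\mathbb Q$-linear combination $B_Z\sim (\text{something like }(k+\varepsilon)/m)L_Z$ we arrange that the largest $c$ with $[X,\omega+cB]$ qlc over the generic point of $V_Z$ satisfies $0<c<1$, and that the residual divisor $(1-c)B+M$ is of the form $(\text{nef and log big})+(\text{effective }q\text{-ample on }Z)$. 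The point is that $aL^0-\omega$ nef and log big lets us absorb $\omega$: $(1-c)(k+\varepsilon) L^0+jL^0 - \omega$ stays nef and log big provided the coefficient of $L^0$ is at least $a$, and one checks $k+\lceil 2a\rceil+1$ comfortably clears the needed threshold after the $c$-rescaling (the factor $2$ and the $+1$ are the slack). Then \ref{f-say3.1}--\ref{f-say3.3} produce the desired nonvanishing section whenever the Euler characteristic $\chi((V_Z)_\eta, (g_*\mathcal O_F(N_0')\otimes\mathcal O_Z(jL_Z^0))|_{(V_Z)_\eta})$, which by \ref{f-say3.2} is a polynomial in $j$ of degree $\leq\dim(V_Z)_\eta\leq\dim V_Z$, is nonzero.

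The "$\dim V_Z$ exceptions" then fall out formally: a nonzero polynomial of degree $\leq \dim V_Z$ has at most $\dim V_Z$ integer roots, and for each non-root $j$ we get $\dim H^0(F_\eta,\mathcal O_{F_\eta}(N'))\neq 0$, hence by \ref{f-say3.3} a section of $\mathcal O_X(N)$ not vanishing along $f(F)$, which descends (via $p_*\mathcal O_X\simeq\mathcal O_Z$) to a section of $\mathcal O_Z(kL_Z+(j+\lceil 2a\rceil+1)L_Z^0)$ nonzero at the generic point of $V_Z$. Finally one must rule out the degenerate case where $F=0$ (no component of $\lfloor B_W+cf^*B\rfloor$ dominates $V_Z$): this is precisely the content of the Claim inside the proof of Lemma \ref{f-lem2.5}, whose argument shows a stratum of the relevant simple normal crossing pair does map onto $V_Z$, so $F\neq 0$. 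I expect the main obstacle to be the coefficient accounting—verifying that after choosing $c$ maximal and normalizing $B_Z$, the coefficient of $L^0$ in the residual part $(1-c)f^*B+f^*M$ is genuinely $\geq a$ uniformly for all $j\geq 0$, so that the vanishing theorem for quasi-log pairs applies on $Z$; once that inequality is pinned down, everything else is a transcription of \ref{f-say3.1}.
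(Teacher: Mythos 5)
Your overall skeleton does match the paper's proof (take $B=\frac{1}{2m}B_0+B_1+\cdots+B_k$ with $B_i\in|L|$ general, get non-qlc-ness at the generic point of $V$ from Lemma \ref{f-lem2.5}, run \ref{f-say3.1}--\ref{f-say3.3} with $N_j=kL+(j+\lceil 2a\rceil+1)L^0$, and count roots of a polynomial of degree $\leq\dim V_Z$), but two essential steps are missing or wrong. First, the framework of \ref{f-say3.1} requires $M$ to be $\pi$-\emph{semiample}, and verifying this is exactly where the hypothesis ``$aL^0-\omega$ nef and log big'' enters: the paper takes $M_j=\lceil 2a\rceil L^0-\omega+\frac12 L^0+jL^0$ and invokes the basepoint-free theorem of Reid--Fukuda type for qlc pairs, using that $M_j$ is $\pi$-nef and $M_j-\omega$ is nef and log big (this is the source of the factor $2$ in $\lceil 2a\rceil$). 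Your substitute check --- that $(1-c)(k+\varepsilon)L^0+jL^0-\omega$ is nef and log big ``provided the coefficient of $L^0$ is at least $a$'' --- fails as stated, since $1-c$ can be arbitrarily small and $j=0$ is allowed, and in any case it does not feed into \ref{f-say3.1}: $L^0$ is only a pullback from $Z$, hence not $\pi$-ample, and log bigness with respect to $[X,\omega]$ is not log bigness for the modified pair on $W$; the semiampleness of $M$ is what allows it to be absorbed, leaving the pullback of the $q$-ample $B_Z$ to drive the vanishing. Relatedly, your normalization $B_Z\sim((k+\varepsilon)/m)L_Z$ is incompatible with Lemma \ref{f-lem2.5}, which needs coefficient $1$ on $D_1,\dots,D_k$; the correct choice is $B\sim_{\mathbb Q}\frac12 L^0+kL$.

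Second, the crucial point of the proof --- that $j\mapsto\dim H^0(F_\eta,\mathcal O_{F_\eta}(N'_j))$ is \emph{not identically zero} --- is absent. You only address $F\neq 0$, and you attribute that to the Claim inside the proof of Lemma \ref{f-lem2.5}; but that Claim concerns strata of $(Z,\Delta_Z+f^*D_1)$ dominating a component of $\Supp D_1$, not components of $\lfloor B_W+cf^*B\rfloor$ whose $g$-image is exactly $V_Z$ (the latter rests on the maximality of $\dim p(V)$ in \ref{f-say3.1}, which here comes from the minimality of $\mathrm{codim}_ZV_Z$ together with $\Sigma\subset\Bs_\pi|L|=p^{-1}\Bs_q|L_Z|$). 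More importantly, even granting $F\neq 0$, nonemptiness alone gives nothing: one must restrict to a general fiber $C$ of $F\to g(F)=V_Z$, note that $g^*(kL_Z+(\lceil 2a\rceil+1)L^0_Z)|_C=0$ and $G_1|_C=0$, so $N'_0|_C=H|_C$ is effective, whence $g_*\mathcal O_F(N'_0)\neq 0$ and, twisting by the $q$-ample $jL^0_Z$ over affine $Y$, $H^0\neq 0$ for $j\gg 1$. Only then is the polynomial nonzero and the ``at most $\dim V_Z$ exceptions'' conclusion meaningful; without this computation your final root-counting step presupposes exactly what has to be proved.
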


\begin{proof}
Let $V$ be an irreducible component of $p^{-1}(V_Z)$ 
such that $\mathrm{codim}_XV\leq k$ and $p(V)=V_Z$. 
Let $B_i$ be a general member of $|L|$ for 
every $0\leq i\leq k$. We put 
\begin{equation}
B=\frac{1}{2m}B_0+B_1+\cdots +B_k. 
\end{equation} 
Then $B\sim _{\mathbb Q} \frac{1}{2}L^0+kL$, 
$[X, \omega+B]$ is qlc outside $\Bs_\pi |L|$ and 
$[X, \omega+B]$ is not qlc at the 
generic point of $V$ by Lemma \ref{f-lem2.5}. 
We will apply the method in \ref{f-say3.1}, \ref{f-say3.2}, and \ref{f-say3.3} 
with 
\begin{equation}
N_j=kL+(j+\lceil 2a\rceil +1)L^0
\end{equation}
\begin{equation}
M_0 =\lceil 2a\rceil L^0-\omega +\frac{1}{2}L^0, 
\end{equation}
and 
\begin{equation}
M_j=M_0+jL^0. 
\end{equation}
We note that $M_j$ is $\pi$-semiample 
for every $j\geq 0$ by 
the basepoint-free theorem of Reid--Fukuda 
type for qlc pairs (see \cite[Chapter 6]{fujino-foundation} 
and \cite{fujino-reid-fukuda}) since $M_j$ is $\pi$-nef and 
$M_j-\omega$ is nef and log big over $Y$ with respect 
to $[X, \omega]$. 
The crucial point is to show that 
\begin{equation}
\dim H^0(F_\eta, \mathcal O_{F_\eta}(N'_j))
=\chi ((V_Z)_\eta, g_*\mathcal O_F(N'_j)|_{(V_Z)_\eta})
\end{equation} 
is not identically zero, where 
\begin{equation}
N'_j=f^*N_j +H-G_1
\end{equation} 
for every $j$. 
Let $C\subset F$ be a general fiber 
of $F\to g(F)=V_Z$. 
Then 
\begin{equation}
N'_0=(g^*(kL_Z+(\lceil 2a\rceil+1)L^0_Z)
+H-G_1)|_C=H|_C. 
\end{equation} 
Hence $g_*\mathcal O_F(N'_0)$ is not the zero sheaf, and 
\begin{equation}
H^0(F, \mathcal O_F(N'_j))=
H^0(Z, g_*\mathcal O_F(N'_0)\otimes 
\mathcal O_Z(jL^0_Z))\ne 0
\end{equation}
for $j\gg 1$. 
Therefore, $\dim H^0(F_\eta, \mathcal O_{F_\eta}(N'_j))$ is a nonzero 
polynomial of degree at most $\dim V_Z$ in $j$ for $j\geq 0$. 
Thus it can vanish for at most $\dim V_Z$ different 
values of $j$. 
This implies that 
\begin{equation}
\begin{split}
f(F)&\not\subset \Bs_\pi|kL+(j+\lceil 2a\rceil +1)L^0|
\\&=p^{-1}\Bs_q|kL_Z+(j+\lceil 2a\rceil +1)L^0_Z|
\end{split}
\end{equation}
with at most $\dim V_Z$ exceptions. 
Therefore, 
\begin{equation}
V_Z=g(F)\not\subset 
\Bs _q|kL_Z+(j+\lceil 2a\rceil +1)L^0_Z|
\end{equation} 
with at most $\dim V_Z$ exceptions. 
This is what we wanted. 
\end{proof}

\begin{cor}\label{f-cor3.5}
We use the same notation as in Lemma \ref{f-lem3.4}. 
We further assume that $m\geq 2a+\dim Z$. 
We put $k=\mathrm{codim}_Z\Bs_q|L_Z|$. 
Then we have 
\begin{equation}
\dim \Bs _q |(2k+2)L_Z|<\dim \Bs_q |L_Z|. 
\end{equation}
\end{cor}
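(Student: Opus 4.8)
The plan is to apply Lemma \ref{f-lem3.4} repeatedly, peeling off base locus components in decreasing order of codimension. First I would set $k=\operatorname{codim}_Z\Bs_q|L_Z|$ and let $V_Z\subset\Bs_q|L_Z|$ be an irreducible component realizing this minimal codimension. The hypothesis $m\geq 2a+\dim Z$ is what makes the bookkeeping work: since $L_Z\sim mL^0_Z$, we have $kL_Z+(j+\lceil 2a\rceil+1)L^0_Z\sim(km+j+\lceil 2a\rceil+1)L^0_Z$, and I want to absorb the ``$+(j+\lceil 2a\rceil+1)L^0_Z$'' correction term into a multiple of $L_Z$ so that the conclusion is phrased purely in terms of $L_Z$. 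Concretely, I would check that for a suitable range of $j$ between $0$ and roughly $\dim V_Z\leq\dim Z$, the divisor $kL_Z+(j+\lceil 2a\rceil+1)L^0_Z$ is linearly equivalent to $(k+1)L_Z$ (or a slightly larger multiple), using $m\geq 2a+\dim Z\geq \lceil 2a\rceil+1+j$ for the relevant $j$. This is the step where the precise constant $2a+\dim Z$ gets used.

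Next, Lemma \ref{f-lem3.4} tells me that with at most $\dim V_Z$ exceptional values of $j$, the component $V_Z$ is not contained in $\Bs_q|kL_Z+(j+\lceil 2a\rceil+1)L^0_Z|$. Since there are at least $\dim V_Z+1$ admissible values of $j$ available in the window $0\leq j\leq\dim V_Z$ (here again using $m\geq 2a+\dim Z$ so that all these $j$ keep the divisor in the right range), at least one non-exceptional value exists, yielding a multiple $c L_Z$ with $k+1\leq c\leq$ something bounded by $2k+2$ (after accounting for the $\lceil 2a\rceil+1$ shift and the at-most-$\dim V_Z$ extra $L^0_Z$'s, all swallowed into $L_Z$'s using $m$ large) such that $V_Z\not\subset\Bs_q|cL_Z|$. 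Since $|cL_Z|$'s base locus is contained in $|(2k+2)L_Z|$'s base locus is false in general, I'd instead argue that $cL_Z$ divides $(2k+2)L_Z$ in the sense that $(2k+2)L_Z-cL_Z$ is effective-free enough; more carefully, I'd just show directly that for \emph{every} component $V_Z$ of $\Bs_q|L_Z|$ of minimal codimension $k$, one has $V_Z\not\subset\Bs_q|(2k+2)L_Z|$, by noting $(2k+2)L_Z=(kL_Z+(j_0+\lceil2a\rceil+1)L^0_Z)+(\text{effective})$ for the good value $j_0$ — so a section not vanishing on $V_Z$ for the smaller system multiplies up to one for $(2k+2)L_Z$.

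Having shown that no minimal-codimension component of $\Bs_q|L_Z|$ survives in $\Bs_q|(2k+2)L_Z|$, I would conclude that every irreducible component of $\Bs_q|(2k+2)L_Z|$ has codimension strictly greater than $k$, i.e. $\dim\Bs_q|(2k+2)L_Z|<\dim\Bs_q|L_Z|$. One subtlety to handle: passing from $|L_Z|$ to $|(2k+2)L_Z|$ could in principle \emph{introduce} new base-locus components, but any such component is contained in $\Bs_q|(2k+2)L_Z|$, and if it had codimension $\leq k$ it would in particular be contained in $\Bs_q|L_Z|$ scaled appropriately — no, that inclusion goes the wrong way too. The correct point is that $\Bs_q|(2k+2)L_Z|\subseteq\Bs_q|L_Z|$ because $L_Z$ is a summand: $(2k+2)L_Z-L_Z=(2k+1)L_Z$ is $q$-base-point... no. Actually $\Bs|(N)|\subseteq\Bs|L|$ whenever $N=L+(\text{effective mobile enough})$; since $\Bs_q|(2k+2)L_Z|\subseteq\Bs_q|L_Z|$ fails, I should instead note $\Bs_q|(2k+2)L_Z|\subseteq\Bs_q|(2k+2)L_Z|$ trivially and that its minimal-codimension components, if of codimension $\le k$, would be components of $\Bs_q|L_Z|$ of codimension $k$ (as $\Bs_q|L_Z|\supseteq$ nothing new)...

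I expect the main obstacle to be precisely this containment bookkeeping — making rigorous that removing all codimension-$k$ components in the new linear system forces a strict drop in dimension, together with verifying that the arithmetic $m\geq 2a+\dim Z$ leaves enough room for the $\dim V_Z+1$ values of $j$ and for absorbing the $L^0_Z$-corrections into multiples of $L_Z$ bounded by $2k+2$. The rest is a direct invocation of Lemma \ref{f-lem3.4}.
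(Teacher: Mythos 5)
There is a genuine gap, and it is exactly at the step you flagged as the ``main obstacle.'' The paper's proof uses Lemma \ref{f-lem3.4} \emph{twice}, for the two complementary systems $|kL_Z+(j+\lceil 2a\rceil +1)L^0_Z|$ and $|kL_Z+(2m-j-\lceil 2a\rceil -1)L^0_Z|$, whose sum is exactly $2kL_Z+2mL^0_Z=(2k+2)L_Z$; since each application excludes at most $\dim V_Z$ values of $j$, one can pick a single $j$ good for both, and multiplying the two sections not vanishing along $V_Z$ gives a section of $(2k+2)L_Z$ not vanishing along $V_Z$. The hypothesis $m\geq 2a+\dim Z$ is used to guarantee that $2m-j-\lceil 2a\rceil-1\geq \lceil 2a\rceil+1$ for the relevant $j$, i.e.\ that the second system is again of the form $kL_Z+(j'+\lceil 2a\rceil+1)L^0_Z$ with $j'\geq 0$ covered by Lemma \ref{f-lem3.4} --- not, as you propose, to ``absorb the $L^0_Z$-correction into a multiple of $L_Z$'': for $0\leq j\leq \dim V_Z$ one has $kL_Z+(j+\lceil 2a\rceil+1)L^0_Z\sim(km+j+\lceil 2a\rceil+1)L^0_Z$, which is an integral multiple of $L_Z$ only when $m$ divides $j+\lceil 2a\rceil+1$, i.e.\ for essentially one value of $j$ that may well be exceptional. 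Your fallback, writing $(2k+2)L_Z=(kL_Z+(j_0+\lceil 2a\rceil+1)L^0_Z)+(\text{effective})$ and ``multiplying up,'' does not close the gap either: the complementary class $(k+2)L_Z-(j_0+\lceil 2a\rceil+1)L^0_Z$ has no section known to avoid $V_Z$, because every section of $L_Z$ vanishes along $V_Z\subset \Bs_q|L_Z|$ and $L^0_Z$ is only $q$-ample, not $q$-free, so its multiples may also have $V_Z$ in their base loci. This is precisely why the complementary pairing in $j$ is needed.

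Your containment worry is also resolved the other way around: $\Bs_q|(2k+2)L_Z|\subseteq \Bs_q|L_Z|$ is true and is the ``obvious'' first line of the paper's proof --- if $s$ is a section of $L_Z$ not vanishing at a point, then $s^{\otimes(2k+2)}$ is a section of $(2k+2)L_Z$ not vanishing there --- so no new base-locus components can appear. With this containment, showing that \emph{every} maximal-dimensional (equivalently, codimension-$k$) component of $\Bs_q|L_Z|$ fails to lie in $\Bs_q|(2k+2)L_Z|$ immediately forces $\dim \Bs_q|(2k+2)L_Z|<\dim\Bs_q|L_Z|$, which is the final bookkeeping you left unresolved. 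So the structure you need is: containment by taking powers of sections, then the two-system application of Lemma \ref{f-lem3.4} with a common good $j$, applied to each top-dimensional component.
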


\begin{proof}
It is obvious that $\Bs_q|(2k+2)L_Z|\subset \Bs_q |L_Z|$. 
Let $V_Z$ be a maximal dimensional 
irreducible component of $\Bs_q |L_Z|$. 
Then there is a value $0\leq j <\dim Z$ such that 
$V_Z$ is not in the base loci of 
\begin{equation}
|kL_Z+(j+\lceil 2a\rceil +1)L^0_Z| \quad \text{and} \quad 
|kL_Z+(2m-j-\lceil 2a\rceil -1)L^0_Z|
\end{equation} 
by Lemma \ref{f-lem3.4}. 
Therefore, we see that $V_Z$ is not in the base locus of 
\begin{equation}
\begin{split}
&|kL_Z+(j+\lceil 2a\rceil +1) L^0_Z +kL_Z
+(2m-j-\lceil 2a \rceil -1)L^0_Z|\\ &=|2kL_Z+2mL^0_Z|=|(2k+2)L_Z|. 
\end{split} 
\end{equation}
Thus, we obtain 
$\Bs_q|(2k+2)L_Z|\subsetneq \Bs_q |L_Z|$. 
\end{proof}

\begin{lem}\label{f-lem3.6}
Let $\pi:X\to Y$ be a projective morphism from an irreducible 
qlc pair $[X, \omega]$ to an affine scheme $Y$. 
Let $D$ be a $\pi$-nef Cartier divisor on $X$ such that 
$aD-\omega$ is nef and log big over $Y$ with 
respect to $[X, \omega]$. 
Then we can find an effective Cartier divisor $D'\in |2(\lceil a\rceil +\dim X)D|$ 
such that $\Supp D'$ contains no qlc centers of $[X, \omega]$. 
\end{lem}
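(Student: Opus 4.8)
The plan is to produce a single member of $|2(\lceil a\rceil+\dim X)D|$ that is ``general enough'' by first reducing to a nonvanishing statement along the qlc centers and then running Koll\'ar's method of \ref{f-say3.1}--\ref{f-say3.3}. Write $n=\dim X$ and $m=2(\lceil a\rceil+n)$, and let $W_1,\dots,W_t$ be the finitely many qlc centers of $[X,\omega]$; since $X$ is irreducible, each $W_i$ is a proper closed subvariety, so the support of a nonzero effective Cartier divisor automatically avoids $X$ itself. Thus it is enough to find a member $D'\in|mD|$ with $W_i\not\subset\Supp D'$ for all $i$, and for that it suffices to show $\pi_*\mathcal O_X(mD)\ne 0$ together with $W_i\not\subset\Bs_\pi|mD|$ for each $i$, after which a general member of $|mD|$ works. (By \ref{f-say1.8} we keep $Y$ affine, and we may assume $\pi$ surjective after replacing $Y$ by its scheme-theoretic image.)

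Next I would reduce to nonvanishing on the qlc centers. Since $m\geq\lceil a\rceil\geq a$ and $D$ is $\pi$-nef, the divisor $mD-\omega=(m-a)D+(aD-\omega)$ is nef and log big over $Y$ with respect to $[X,\omega]$, so the vanishing theorem for quasi-log canonical pairs (see \cite[Theorem~3.8]{fujino-reid-fukuda} and \cite[Chapter~6]{fujino-foundation}) gives that
\[
H^0(X,\mathcal O_X(mD))\longrightarrow H^0\bigl(W,\mathcal O_W(mD|_W)\bigr)
\]
is surjective for every qlc stratum $W$, in particular for $W=X$ and for each $W_i$. Hence, if $H^0(W,\mathcal O_W(mD|_W))\ne 0$ for every qlc center $W$, then the case $W=X$ gives $\pi_*\mathcal O_X(mD)\ne 0$, and for each $W_i$ some section of $\mathcal O_X(mD)$ is not identically zero along $W_i$, so $W_i\not\subset\Bs_\pi|mD|$. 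By adjunction $[W,\omega|_W]$ is again an irreducible qlc pair with $D|_W$ $(\pi|_W)$-nef and $aD|_W-\omega|_W$ nef and log big, and $\dim W\leq n$; so the lemma follows once we prove the \emph{Nonvanishing}: for an irreducible qlc pair $[X,\omega]$ with $\omega$ $\mathbb Q$-Cartier, a projective $\pi\colon X\to Y$ to an affine $Y$, and a $\pi$-nef Cartier divisor $D$ with $aD-\omega$ nef and log big over $Y$, one has $H^0(X,\mathcal O_X(mD))\ne 0$ whenever $m\geq 2(\lceil a\rceil+\dim X)$.

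For the Nonvanishing I would first set up a contraction. Since $D$ is $\pi$-nef and $\lceil a\rceil D-\omega$ is nef and log big over $Y$, the basepoint-free theorem of Reid--Fukuda type for qlc pairs gives that $rD$ is $\pi$-free for $r\gg 0$; the contractions over $Y$ defined by $|rD|$ and $|(r+1)D|$ have the same fibres, so Stein factorization produces $p\colon X\to Z$ with $p_*\mathcal O_X\simeq\mathcal O_Z$, $q\colon Z\to Y$ projective, and a $q$-ample Cartier divisor $D_Z$ on $Z$ with $D\sim p^*D_Z$ (take $D=(r+1)D-rD=p^*(A'-A)$ where $rD=p^*A$, $(r+1)D=p^*A'$); also $\dim Z\leq n$ and $H^0(X,\mathcal O_X(mD))=H^0(Z,\mathcal O_Z(mD_Z))$ by the projection formula. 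Now I would run \ref{f-say3.1}--\ref{f-say3.3}: choosing a general point $V_Z\in Z$, putting $V=p^{-1}(V_Z)$, and taking $\dim Z+1$ general members through $V_Z$ of a $q$-generated multiple of $D_Z$ (chosen to miss the finitely many images $p(W_i)\subsetneq Z$), Lemma~\ref{f-lem2.5} makes $[X,\omega+B]$ non-quasi-log canonical at the generic point of $V$ for a suitable effective $\mathbb Q$-divisor $B=p^*B_Z$ with $B_Z$ $q$-ample and $\Supp B$ containing no qlc center; then the lifting in \ref{f-say3.3} produces a section of $\mathcal O_X(mD)$ not vanishing along a divisor over $V$, provided $M:=mD-\omega-B$ is $\pi$-semiample. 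The last requirement holds, by the same basepoint-free theorem, once $m$ exceeds the $D$-coefficient used for $B$ by at least $2a$; since that coefficient can be kept $\leq\dim Z+1\leq n+1$, any $m\geq 2a+n+1$, hence any $m\geq\lceil 2a\rceil+n+1$, works, and $\lceil 2a\rceil+n+1\leq 2\lceil a\rceil+n+1\leq 2(\lceil a\rceil+n)$.

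The main obstacle is the Nonvanishing with the explicit bound. The purely qualitative assertion ``$D$ is $\mathbb Q$-linearly equivalent to an effective divisor whose support avoids the qlc centers'' is cheap, since it follows from $D=p^*D_Z$ and $\mathbb Q$-effectivity of the $q$-ample $D_Z$; but that gives no effective denominator, because $q$-ampleness of $D_Z$ alone does not bound the least $m$ with $|mD_Z|\ne\emptyset$. What makes $m=2(\lceil a\rceil+\dim X)$ admissible is precisely that $aD-\omega$ is nef and \emph{log big}, which forces $a$ to be large relative to the ``negativity'' of $\omega$; the delicate point is to channel exactly this much positivity through \ref{f-say3.1}--\ref{f-say3.3} while keeping the coefficient bookkeeping inside $2(\lceil a\rceil+\dim X)$, and to dispose of the degenerate case $\dim Z=0$ (where $D$ is a torsion class in $\operatorname{Pic} X$, so $p$ must be replaced by a direct analysis of $\mathcal O_X(mD)$, using that $-\omega$ big rules out nontrivial torsion).
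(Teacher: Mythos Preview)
Your reduction to a nonvanishing statement on each qlc stratum is correct, but the Nonvanishing argument itself has a circularity that breaks the effective bound. To run \ref{f-say3.1}--\ref{f-say3.3} you must choose effective Cartier divisors $B_0,\dots,B_k$ through $V$ to which Lemma~\ref{f-lem2.5} applies; that lemma requires the $B_i$ (for $i\geq 1$) to appear with coefficient $1$ as \emph{Cartier} divisors. Your $B_i$ lie in $|rD|$ where $r$ is only known to satisfy ``$rD$ is $\pi$-free for $r\gg 0$'', so the $D$-coefficient of $B=\tfrac{1}{l}B_0+\sum_{i=1}^{k}B_i$ is $r(k+\tfrac{1}{l})$, not $k+\tfrac{1}{l}$. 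Scaling each $B_i$ by $1/r$ would fix the coefficient but takes you outside the hypotheses of Lemma~\ref{f-lem2.5}. Consequently the requirement that $M=mD-\omega-B$ be $\pi$-semiample forces $m\gtrsim 2a + r\dim Z$, and $r$ is not controlled by $a$ and $\dim X$; an effective bound on $r$ is essentially what Lemma~\ref{f-lem3.6} (fed into Corollary~\ref{f-cor3.5}) is there to provide. So the claim ``that coefficient can be kept $\leq\dim Z+1$'' is unjustified.

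The paper avoids Koll\'ar's lifting machinery here altogether. For each qlc stratum $S$, the vanishing theorem gives $R^i\pi_*\mathcal O_S(jD)=0$ for $i>0$ and $j\geq a$, so over the generic point $\eta$ of $\pi(S)$ one has $h^0(S_\eta,jD)=\chi(S_\eta,jD)$, a polynomial of degree $\leq\dim S_\eta$ in $j$; it is nonzero because $|mD|$ is free for $m\gg 0$. Hence at most $\dim S_\eta$ values $j\geq 0$ satisfy $S\subset\Bs_\pi|(\lceil a\rceil+j)D|$, and the surjection $\pi_*\mathcal O_X(jD)\to\pi_*\mathcal O_S(jD)$ lifts the sections. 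The pairing trick of Corollary~\ref{f-cor3.5} (match $j$ with $2\dim X-j$) then yields $S\not\subset\Bs_\pi|2(\lceil a\rceil+\dim X)D|$. This is exactly the effective nonvanishing you were after, obtained directly from polynomiality of $\chi$ rather than from \ref{f-say3.1}--\ref{f-say3.3}.
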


\begin{proof}
Let $S$ be an arbitrary qlc stratum of $[X, \omega]$. 
Let us consider the following short exact sequence: 
\begin{equation}
0\to \mathcal I_S\otimes \mathcal O_X(jD)\to \mathcal O_X(jD)\to 
\mathcal O_S(jD)\to 0
\end{equation}
where $\mathcal I_S$ is the defining ideal sheaf of $S$. 
By the vanishing theorem for qlc pairs 
(see, for example, \cite[Theorem 3.8]{fujino-reid-fukuda} and 
\cite[Chapter 6]{fujino-foundation}), 
\begin{equation}\label{eq3.37}
R^i\pi_*(\mathcal I_S\otimes \mathcal O_X(jD))=
R^i\pi_*\mathcal O_X(jD)=R^i\pi_*\mathcal O_S(jD)=0 
\end{equation} 
for every $i\geq 1$ and $j\geq a$. 
Let $\eta$ be the generic point of $\pi(S)$. 
Then we have that 
\begin{equation}
\dim H^0(S_\eta, \mathcal O_{S_\eta} (jD))
=\chi (S_\eta, \mathcal O_{S_\eta}(jD))
\end{equation} 
holds for $j\geq a$. 
We note that $\chi(S_\eta, \mathcal O_{S_\eta}(jD))$ is a nonzero 
polynomial in $j$ because $|mD|$ is basepoint-free for $m\gg 0$. 
We note that the natural restriction map 
\begin{equation}
\pi_*\mathcal O_X(jD)\to \pi_*\mathcal O_S(jD)
\end{equation} 
is surjective for every $j\geq a$ by \eqref{eq3.37}. 
Therefore, with at most $\dim S_\eta$ exceptions, 
$S\not\subset \Bs_\pi|(\lceil a\rceil +j)D|$ for $j\geq 0$. 
Thus, we can take an effective Cartier divisor $D'\in |2(\lceil a\rceil +\dim X)D|$ 
such that $D'$ contains no qlc centers of $[X, \omega]$ 
by the same argument as in the proof of 
Corollary \ref{f-cor3.5}. 
\end{proof}

\begin{rem}\label{f-rem3.7}
In this section, we need the projectivity of $\pi:X\to Y$ 
only when we use the basepoint-free theorem of Reid--Fukuda 
type for qlc pairs (see \cite[Chapter 6]{fujino-foundation} and 
\cite{fujino-reid-fukuda}). 
Therefore, if we apply the method explained in this section to 
kawamata log terminal pairs, then it is sufficient to 
assume that $\pi:X\to Y$ is only proper by the 
usual Kawamata--Shokurov basepoint-free theorem. 
\end{rem}

\section{Effective very ampleness lemma}\label{f-sec4}

The following easy lemma is a quasi-log canonical 
version of \cite[Lemma 7.1]{fujino-fujita}. 

\begin{lem}[Effective very ampleness lemma]\label{f-lem4.1}
Let $[X, \omega]$ be a quasi-log canonical 
pair and let $\pi:X\to Y$ be 
a projective morphism between schemes. 
Let $A$ be a $\pi$-ample Cartier divisor on $X$ 
such that $\mathcal O_X(A)$ is $\pi$-generated. 
Assume that $A-\omega$ is 
nef and log big over $Y$ with respect to $[X, \omega]$. 
Then $(\dim X+1)A$ is very ample over $Y$. 
\end{lem}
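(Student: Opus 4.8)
The plan is to reduce very ampleness of $(\dim X+1)A$ over $Y$ to the projective normality statement that the multiplication map
\begin{equation*}
\pi_*\mathcal O_X(A)^{\otimes k}\longrightarrow \pi_*\mathcal O_X(kA)
\end{equation*}
is surjective for all $k\geq 1$, combined with the $\pi$-global generation of $\mathcal O_X(A)$, following the template of \cite[Lemma 7.1]{fujino-fujita}. Since $Y$ may be assumed affine (the statement is local on $Y$), the morphism $X\to \mathbb P^N_Y$ induced by the $\pi$-generated system $|A/Y|$ is finite onto its image (because $A$ is $\pi$-ample), and a standard argument shows that if $X$ embeds as a projectively normal subscheme under $|kA/Y|$ then $kA$ already separates points and tangent vectors, hence is $\pi$-very ample, and a fortiori so is $(\dim X+1)A$. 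So the whole content is the surjectivity of the multiplication maps above for the right range of $k$.

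The key step is therefore to prove, by induction on $k$, that $\pi_*\mathcal O_X(A)\otimes \pi_*\mathcal O_X(kA)\to \pi_*\mathcal O_X((k+1)A)$ is surjective for $k\geq \dim X$ (this is what \cite[Lemma 7.1]{fujino-fujita} actually establishes, and it yields very ampleness of $(\dim X+1)A$ directly since one needs the Veronese-type generation to start at degree $\dim X+1$). To run the induction one fixes a general member $H\in |A/Y|$ — here one uses that $\mathcal O_X(A)$ is $\pi$-generated, so a general such $H$ avoids every qlc center of $[X,\omega]$ — and applies the adjunction for qlc pairs (\cite[Theorem 3.8]{fujino-reid-fukuda}, \cite[Chapter 6]{fujino-foundation}) to get that $[H,(\omega+H)|_H]$, or rather $[H,\omega|_H]$ up to the appropriate twist, is again quasi-log canonical with $A|_H - \omega|_H$ nef and log big over $Y$. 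Then the short exact sequence
\begin{equation*}
0\to \mathcal O_X(kA-H)\to \mathcal O_X(kA)\to \mathcal O_H(kA)\to 0
\end{equation*}
together with the vanishing theorem for qlc pairs (which kills $R^1\pi_*\mathcal O_X(kA-H)=R^1\pi_*\mathcal O_X((k-1)A)$ for $k-1\geq a$, noting $A-\omega$ nef and log big forces $a\leq 1$ so this holds for all $k\geq 1$) gives surjectivity of $\pi_*\mathcal O_X(kA)\to \pi_*\mathcal O_H(kA)$; combining the inductive hypothesis on $H$ with this surjectivity and a diagram chase closes the induction.

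The main obstacle will be the bookkeeping in the adjunction/restriction step: one must check that cutting by the general $H$ preserves all the hypotheses (the qlc structure, irreducibility issues, and the "nef and log big over $Y$" property of the relevant divisor on $H$), and that the vanishing theorem applies in exactly the range of twists needed so that no case $k<\dim X$ is left uncontrolled — in particular one should verify $R^1\pi_*\mathcal O_X((k-1)A)=0$ and $R^1\pi_*\mathcal O_H((k-1)A)=0$ for all $k\geq 1$, which follows since $A-\omega$ nef and log big over $Y$ means one may take $a=1$ in the qlc vanishing theorem. Once that is in place, the passage from "projectively normal under $|(\dim X+1)A/Y|$" to "$\pi$-very ample" is the routine argument of \cite[Lemma 7.1]{fujino-fujita}, which I would simply cite.
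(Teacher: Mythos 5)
Your overall skeleton agrees with the paper's (surjectivity of the multiplication maps $\pi_*\mathcal O_X(A)\otimes\pi_*\mathcal O_X(mA)\to\pi_*\mathcal O_X((m+1)A)$ in the right range, then Veronese-type generation gives $\pi$-very ampleness of $(\dim X+1)A$), but the way you propose to get the multiplication surjectivity has genuine gaps. First, the vanishing you invoke is not available at the bottom of your induction: the qlc vanishing theorem gives $R^1\pi_*\mathcal O_X((k-1)A)=0$ only when $(k-1)A-\omega$ is nef and log big over $Y$, i.e.\ for $k\geq 2$ (write $(k-1)A-\omega=(k-2)A+(A-\omega)$); at $k=1$ you would need $-\omega$ to be nef and log big, which is false in general, so your claim that the vanishing ``holds for all $k\geq 1$'' is wrong. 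This is not harmless: the standard hyperplane-induction diagram chase also tacitly needs $\pi_*\mathcal O_X(A)\to\pi_*\mathcal O_H(A|_H)$ to be surjective, which requires exactly the unavailable $R^1\pi_*\mathcal O_X=0$; the classical fix is either to work with the subspace of restricted sections (which still generates $A|_H$) or to push forward by the finite morphism to $\mathbb P^N_Y$ defined by the free $\pi$-ample $A$ and cut by honest hyperplanes there, and your sketch does neither. Second, the adjunction step is mis-stated: for a general $H\in|A/Y|$ one gets that $[X,\omega+H]$ is qlc with $H$ a union of qlc centers, and adjunction yields the qlc pair $[H,(\omega+H)|_H]$, not $[H,\omega|_H]$; consequently the divisor that is nef and log big on $H$ is $2A|_H-(\omega+H)|_H\sim_{\mathbb R}(A-\omega)|_H$, not $A|_H-\omega|_H$, so the inductive statement must be re-indexed (it does in fact close, because the required twist increases by one exactly as the dimension drops by one, but this is precisely the bookkeeping you left open and stated incorrectly).

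The paper's proof avoids all of this by a direct regularity argument: qlc vanishing gives $R^i\pi_*\mathcal O_X((n+1-i)A)=0$ for every $i>0$ with $n=\dim X$ (for $1\leq i\leq n$ the twist minus $\omega$ is $(n-i)A+(A-\omega)$, nef and log big; for $i>n$ the vanishing is automatic from fiber dimension), i.e.\ $\mathcal O_X((n+1)A)$ is $0$-regular with respect to the $\pi$-generated $\pi$-ample $A$, and then Castelnuovo--Mumford regularity (Kleiman; Lazarsfeld, Example 1.8.24) is quoted to get surjectivity of $\pi_*\mathcal O_X(A)\otimes\pi_*\mathcal O_X(mA)\to\pi_*\mathcal O_X((m+1)A)$ for $m\geq n+1$, hence of $\mathrm{Sym}^k\pi_*\mathcal O_X((n+1)A)\to\pi_*\mathcal O_X(k(n+1)A)$ for all $k\geq 1$, which is the very ampleness. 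Note that the regularity formulation never needs any vanishing for the untwisted $\mathcal O_X$ in degrees $\leq n$, which is exactly the point where your hand-rolled induction breaks; so either cite Mumford's lemma as the paper does, or repair your induction along the lines indicated above.
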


\begin{proof}By the vanishing theorem for qlc pairs 
(see, for example, \cite[Theorem 3.8]{fujino-reid-fukuda} 
and \cite[Chapter 6]{fujino-foundation}), 
we have 
\begin{equation}
R^i\pi_*\mathcal O_X((n+1-i)A)=0
\end{equation} for 
every $i>0$, where $n=\dim X$. 
Then, by the Castelnuovo--Mumford regularity, 
we obtain that 
\begin{equation}
\pi_*\mathcal O_X(A)\otimes \pi_*\mathcal O_X(mA)
\to \pi_*\mathcal O_X((m+1)A)
\end{equation} 
is surjective for every $m\geq n+1$ 
(see, for example, \cite{kleiman} and \cite[Example 1.8.24]{lazarsfeld}). 
Therefore, we see that 
\begin{equation}
\mathrm{Sym}^k\pi_*\mathcal O_X((n+1)A)\to \pi_*\mathcal 
O_X(k(n+1)A)
\end{equation} 
is surjective for every $k\geq 1$. 
This implies that $(n+1)A$ is $\pi$-very ample. 
\end{proof}

We note that \cite[1.2 Lemma]{kollar} does not seem to be true 
as stated. 
Therefore, we will use Lemma \ref{f-lem4.1} 
in the proof of Theorem \ref{f-thm1.1} and Theorem \ref{f-thm1.2} in 
Section \ref{f-sec5}. 

\section{Proof of Main Theorems}\label{f-sec5}

In this final section, we prove Theorem \ref{f-thm1.1} and 
Theorem \ref{f-thm1.2}. 

\begin{proof}[Proof of Theorems \ref{f-thm1.1} and \ref{f-thm1.2}]
In Step \ref{f-step1}, we will prove the effective basepoint-free 
theorems in Theorem \ref{f-thm1.1} and Theorem \ref{f-thm1.2}. 
Then we will check the effective very ampleness in Step \ref{f-step2}. 
\begin{step}[Effective freeness]\label{f-step1}
By shrinking $Y$, we may assume that $Y$ is affine. 
By perturbing $\omega$ when $aD-\omega$ is $\pi$-ample, 
we may further assume that 
$\omega$ is a $\mathbb Q$-Cartier divisor (or a $\mathbb Q$-line 
bundle) on $X$. 
By the observation in \ref{f-say1.8}, we may assume that 
$X$ is irreducible. 
By Lemma \ref{f-lem3.6}, we can find an effective 
Cartier divisor $D'\in |2(\lceil a \rceil +\dim X)D|$ such that 
$\Supp D'$ contains no qlc centers of $[X, \omega]$. 
By the basepoint-free theorem for qlc pairs (see, for example, 
\cite[Chapter 6]{fujino-foundation}), 
we have the following commutative diagram: 
\begin{equation}
\xymatrix{
X\ar[r]^{p}\ar[d]_{\pi} & Z\ar[ld]^q\\
Y &
}
\end{equation}
such that $p_*\mathcal O_X\simeq 
\mathcal O_Z$ and that $D\sim p^*D_Z$ for some 
$q$-ample Cartier divisor $D_Z$ on $Z$. 
By applying Lemma \ref{f-lem3.4} repeatedly 
in order to lower the dimension of 
$\dim \Bs_q |mD_Z|$, we finally obtain that 
\begin{equation}
|2^{\dim X+1}(\dim X+1)! (\lceil a\rceil +\dim X)D|
\end{equation} is 
basepoint-free. 
\end{step}
\begin{step}[Effective very ampleness]\label{f-step2}
By combining the effective basepoint-free theorem in Step \ref{f-step1} with 
Lemma \ref{f-lem4.1}, we obtain that 
\begin{equation}
2^{\dim X+1}(\dim X+1)! (\lceil a\rceil +\dim X)(\dim X+1)D
\end{equation} is $\pi$-very ample. 
\end{step}
We have completed the proof of Theorem \ref{f-thm1.1} and 
Theorem \ref{f-thm1.2}. 
\end{proof}

We close this section with the proof of Corollary \ref{f-cor1.4}. 

\begin{proof}[Proof of Corollary \ref{f-cor1.4}]
Without loss of generality, we may assume that 
$Y$ is affine. 
Then $X$ is quasi-projective since $\pi$ is projective. 
By \cite[Theorem 1.2]{fujino-slc} (see Theorem \ref{f-thm1.7}), 
$[X, K_X+\Delta]$ has a natural quasi-log structure compatible 
with the original semi-log canonical structure of $(X, \Delta)$. 
For the details, see \cite{fujino-slc}. 
Then, by applying Theorem \ref{f-thm1.1} and Theorem \ref{f-thm1.2} 
to $[X, K_X+\Delta]$, we obtain the desired statements. 
\end{proof}

\end{document}